\documentclass[final,leqno]{siamltex704}
\usepackage{amsmath}
\usepackage{graphicx}
\usepackage[notcite,notref]{showkeys}
\usepackage{mathrsfs}
\usepackage{graphics}
\usepackage{float}
\usepackage{amsfonts,amssymb}
\usepackage{dsfont}
\usepackage{pifont}
\usepackage{wrapfig} 
\usepackage{hyperref}
\usepackage{multirow}
\usepackage{color}
\numberwithin{equation}{section}
\usepackage{threeparttable}
\def\3bar{{|\hspace{-.02in}|\hspace{-.02in}|}}

\def\T{{\mathcal{T}}}

\def\pT{{\partial T}}

\def\bw{{\mathbf{w}}}

\def\bn{{\mathbf{n}}}

\def\bw{{\mathbf{w}}}

\def\bf{{\mathbf{f}}}

\def\ljump{{[\![}}
\def\rjump{{]\!]}}

\newtheorem{algorithm}{Weak Galerkin Algorithm}[section]

\setlength{\parindent}{0.25in} \setlength{\parskip}{0.08in}

\title {A parallel iterative procedure for weak Galerkin methods for second order elliptic problems}

\begin{document}

 \author{
 Chunmei Wang \thanks{Department of Mathematics, University of Florida, Gainesville, FL 32611, USA (chunmei.wang@ufl.edu). The research of Chunmei Wang was partially supported by National Science Foundation Award DMS-2136380.}
 \and
 Junping Wang\thanks{Division of Mathematical
 Sciences, National Science Foundation, Alexandria, VA 22314
 (jwang@nsf.gov). The research of Junping Wang was supported in part by the
 NSF IR/D program, while working at National Science Foundation.
 However, any opinion, finding, and conclusions or recommendations
 expressed in this material are those of the author and do not
 necessarily reflect the views of the National Science Foundation.}
  \and
 Shangyou Zhang\thanks{Department of Mathematical Sciences,  University of Delaware, Newark, DE 19716, USA (szhang@udel.edu).  }
 }

\maketitle

\baselineskip=12pt

\begin{abstract}
A parallelizable iterative procedure based on domain decomposition is presented and analyzed for weak Galerkin finite element methods for second order elliptic equations. The convergence analysis is established for the decomposition of the domain into individual elements associated to the weak Galerkin methods or into larger subdomains. A series of numerical tests are illustrated to verify the theory developed in this paper.
\end{abstract}

\begin{keywords} weak Galerkin, finite element methods, elliptic equation, parallelizable iterative, domain decomposition.
\end{keywords}

\begin{AMS}
Primary, 65N30, 65N15, 65N12, 74N20; Secondary, 35B45, 35J50,
35J35
\end{AMS}

\pagestyle{myheadings}

\section{Introduction}
This paper is concerned with an iterative procedure related to domain decomposition techniques based on the use of subdomains as small as individual elements for weak Galerkin (WG) methods for second order elliptic equations in $\mathbb R^d (d=2, 3)$. For simplicity, we consider the second order elliptic problem with Dirichlet boundary condition
\begin{equation}\label{model}
\begin{split}
-\nabla\cdot(a\nabla u)+cu=&f, \qquad\text{in} \ \Omega\subset\mathbb R^d,\\
u=&g, \qquad\text{on} \ \partial\Omega,
\end{split}
\end{equation}
where $d=2, 3$. Assume the coefficients $a(x)$ and $c(x)$ satisfy
$$
0<a_0\leq a(x) \leq a_1<\infty, \quad
0\leq c(x)\leq c_1<\infty,
$$
and are sufficiently regular so that the existence and uniqueness of a solution of \eqref{model}  in $H^s(\Omega)$ hold true for some $s>1$ for  reasonable $f$ and $g$. A weak formulation for the model problem \eqref{model} reads as follows: Find $u\in  H^1(\Omega)$ such that $u=g$ on $\partial\Omega$, satisfying
\begin{equation}\label{weakform}
(a\nabla u, \nabla v)+(cu, v)= (f, v)  \qquad \forall v\in H_0^1(\Omega).
\end{equation}

The WG finite element method is emerging as a new and efficient numerical method for solving PDEs. The idea of WG
method was first proposed by Wang and Ye for solving second
order elliptic equations in 2011 \cite{wysecond}. This method was
subsequently developed for various PDEs, see \cite{mwy1, mwy2, mwy3655,
mwy2015, mwyz-biharmonic, ww4, ww5, ww2016, ww6,
ww7,ww4, wysecond, wy3655, mwy3, wystokes, ww2, ww5, r23, rham}.
To our best knowledge, there have not been any iterative algorithms designed for the WG methods along the line of domain decompositions. Due to the large size of the computational problem, it is necessary and crucial to design efficient and parallelizable iterative algorithms for the WG scheme. Our iterative procedure is motivated by Despres \cite{9}  for a Helmholz problem and  another Helmholz-like problem related to Maxwell's equations by Despres \cite{10, 11}.  It should be noted that the convergence in \cite{9, 10, 11} were established for the differential problems in strong form where numerical results were presented to validate the iterative procedures for the discrete case. Douglas et al. \cite{j} introduced a parallel iterative procedure for the second order partial differential equations by using the mixed finite element methods. The goal of this paper is to extend the result of Douglas into weak Galerkin finite element methods. In particular, based on the features of weak Galerkin methods, the iterative procedure developed in this paper can be very naturally and easily implemented on a massively parallel computer by assigning each subdomain to its own processor. In addition, the work is based on the hybridized weak Galerkin framework of \cite{wang} extended to non-overlapping subdomains.

The paper is organized as follows. In Section \ref{Section:Hessian}, we briefly review the weak differential operators and their discrete analogies. In Section \ref{Section:WGFEM}, we describe the WG method for the model problem \eqref{model}. In Section \ref{Section:DD-WGFEM}, we introduce domain decompositions and derive a hybridized formulation for the WG method. In Section \ref{Section:IterativeScheme}, we present a parallel iterative procedure for the WG finite element method. In Section \ref{Section:Convergence}, we establish a convergence analysis for the parallel iterative scheme. Finally in Section \ref{Section:numerical-experiments}, we report several  numerical results to verify our convergence theory.

\section{Weak Differential Operators}\label{Section:Hessian}
The primary differential operator in the weak formulation (\ref{weakform}) for the second order elliptic problem (\ref{model}) is the gradient operator $\nabla$, for which a discrete weak version has been introduced in \cite{wy3655}. For completeness, let us briefly review the definition as follows.

Let $T$ be a polygonal or polyhedral domain with boundary $\partial T$. A weak function on $T$ refers to   $v=\{v_0,v_b\}$ where $v_0\in L^2(T)$ and $v_b\in L^{2}(\partial T)$  represent the values of $v$ in the interior and on the boundary of $T$ respectively. Note that $v_b$ may not necessarily be the trace of $v_0$ on $\partial T$. Denote by $W(T)$ the local space of weak functions on $T$; i.e.,
\begin{equation*}\label{2.1}
W(T)=\{v=\{v_0,v_b\}: v_0\in L^2(T), v_b\in
L^{2}(\partial T)\}.
\end{equation*}

The weak gradient of $v\in W(T)$, denoted by $\nabla_w v$, is defined as a linear functional on $[H^1(T)]^{d}$ such that
\begin{equation*}
(\nabla_w  v, \bw)_T=-(v_0,\nabla \cdot \bw)_T+\langle v_b,\bw\cdot  \textbf{n}\rangle_{\partial T} \qquad \forall \bw\in [H^1(T)]^{d}.
\end{equation*}

Denote by $P_r(T)$ the space of all polynomials on $T$ with total degree $r$ and/or less. A discrete version of $\nabla_{w}v$  for $v\in W(T)$, denoted by $\nabla_{w, r, T}v$, is defined as a unique polynomial vector in $[P_r(T) ]^{d}$ satisfying
\begin{equation}\label{disgradient}
(\nabla_{w, r, T}v, \bw)_T=-(v_0,\nabla \cdot \bw)_T+\langle v_b,\bw\cdot\textbf{n}\rangle_{\partial T}, \quad\forall\bw\in [P_r(T)]^{d}.
\end{equation}

\section{Weak Galerkin Algorithm}\label{Section:WGFEM}

Let ${\cal T}_h$ be a finite element partition of the domain $\Omega$ consisting of polygons or polyhedra that are shape-regular \cite{wy3655}. Denote by ${\mathcal E}_h$ the
set of all edges or flat faces in ${\cal T}_h$ and  ${\mathcal
E}_h^0={\mathcal E}_h \setminus
\partial\Omega$ the set of all interior edges or flat faces.
Denote by $h_T$ the meshsize of $T\in {\cal T}_h$ and
$h=\max_{T\in {\cal T}_h}h_T$ the meshsize for the partition
${\cal T}_h$.

For any given integer $k\geq 1$, denote by
$W_k(T)$ the local discrete space of  the weak functions given by
\begin{equation}\label{wk}
   W_k(T)=\{\{v_0, v_b\}: v_0\in P_k(T), v_b\in
P_{k-1}(e), e\subset \partial T\}.
\end{equation}
Patching the local discrete space $W_k(T)$ with a single value on the element interface yields the global finite element space; i.e.,
$$
 W_h=\{v=\{v_0, v_b\}: \ v|_T \in W_k(T), v_b|_{\partial T_i\cap \partial T_j} = v_b|_{\partial T_j\cap \partial T_i}, T, T_i, T_j \in \T_h\},
$$
where $v_b|_{\partial T_i\cap \partial T_j}$ is the value of $v_b$ on $\partial T_i\cap \partial T_j$ as seen from the element $T_i$.
Denote by $W_h^g$ and $W_h^0$ the subspaces of $W_h$ with non-homogeneous and homogeneous boundary values; i.e.,
\begin{equation*}\label{wh0}
\begin{split}
 W_h^g=&\{\{v_0, v_b\} \in W_h:\  v_b|_{e}=Q_bg, e\subset\partial\Omega\},\\
 W_h^0=&\{\{v_0, v_b\} \in W_h:\  v_b|_{e}=0,  e\subset\partial\Omega\},
 \end{split}
\end{equation*}
where $Q_b$ is the $L^2$ projection onto the space $P_{k-1}(e)$.

For $v \in W_h$, denote by $\nabla_{w} v$ the discrete weak action $\nabla_{w, k-1, T} v$ computed by using (\ref{disgradient}) on each element $T$; i.e.,
\begin{equation}\label{wk-1}
 (\nabla_{w} v)|_T= \nabla_{w, k-1, T}(v|_T).
\end{equation}
Next, we introduce the following bilinear forms in $W_h\times W_h$:
\begin{align} \label{EQ:local-stabilizer}
s(u, v)=&\sum_{T\in {\cal T}_h}s_T(u, v),\\
a(u, v)=&\sum_{T\in {\cal T}_h}a_T(u, v),  \label{EQ:local-bterm}
\end{align}
where
\begin{eqnarray*}
 s_T(u, v)&=& h_T^{-1}\langle Q_bu_0-u_b, Q_bv_0-v_b\rangle_{\partial T},\\
 a_T(u, v)&=&(a\nabla_w u, \nabla_w v)_T+(cu_0, v_0)_T.
\end{eqnarray*}

The weak Galerkin finite element scheme for the second order problem (\ref{model})based on the variational formulation (\ref{weakform}) can be stated as follows:
\begin{algorithm}
Find ${\bar u}_h \in  W_{h}^g$, such that
\begin{equation}\label{wg}
s({\bar u}_h, v)+a({\bar u}_h, v)=(f, v_0)\qquad \forall v\in W_h^0.
\end{equation}
\end{algorithm}

\section{Weak Galerkin based on Domain Decompositions}\label{Section:DD-WGFEM}
Let $\{\Omega_j: j=1,\cdots,M\}$ be a partition of $\Omega$ such that
\begin{equation}
\overline{\Omega}=\bigcup_{j=1}^M \overline{\Omega}_j, \qquad \Omega_j\cap\Omega_k=\emptyset, j\neq k.
\end{equation}
In practice, with the exception of a few $\Omega_j$'s along $\partial\Omega$, each $\Omega_j$ is convex with a piecewise-smooth boundary. We introduce
$$
\Gamma=\partial \Omega, \qquad \Gamma_j=\Gamma\cap\partial\Omega_j,
\qquad \Gamma_{jk}=\Gamma_{kj}=\partial\Omega_j\cap\partial\Omega_k.
$$

Assume that the edges/faces of the elements in ${\cal T}_h$ align with the interface $I_f=\bigcup_{j,k=1}^M\Gamma_{jk}$. The partition ${\cal T}_h$ can be grouped into $M$ sets of elements denoted by ${\cal T}_h^{i}={\cal T}_h \cap \Omega_i$, so that each ${\cal T}_h^i$ provides a finite element partition for the subdomain $\Omega_i$ for $i=1, \cdots, M$. The intersection of the partitions ${\cal T}_h^j$ and ${\cal T}_h^k$ also introduces a finite element partition for the interface $\Gamma_{jk}$, denoted by $\Gamma_{jk}^h$. 

Let us introduce the Lagrange multipliers on the edge $\Gamma_{jk}$  as seen from $\Omega_j$ as follows 
$$
\Lambda_{jk}=\{\lambda_{jk}: \lambda_{jk}  \in P_{k-1}(\Gamma_{jk}\cap\Omega_j), \Gamma_{jk}\neq \emptyset \mbox{ is of dimension $d-1$}, \forall j, k=1,\cdots, M\}.
$$
Note that there are two copies of $P_{k-1}(e)$ assigned to the interface $\Gamma_{jk}$ as seen from $\Omega_j$ and  $\Omega_k$ respectively; i.e., $\Lambda_{jk}$ and $\Lambda_{kj}$. Note that the Lagrangian space $\Lambda_{jk}$ is defined only on $d-1$ dimensional interfaces $\Gamma_{jk}\subset\mathbb R^{d-1}$.

Define, for $j=1,\cdots, M$, the finite element space on each subdomain $\Omega_j$:  
\begin{eqnarray*}
W_h(\Omega_j)&=& \{v|_{\Omega_j}: \ v\in W_h\},\\
W_h^0(\Omega_j)&=&\{v\in  W_h(\Omega_j): v|_{\Gamma_j}=0\}.
\end{eqnarray*}

The weak Galerkin finite element method \eqref{wg}  restricted in the subdomain $\Omega_j$ $(j=1, \cdots, M)$ is as follows:  Find $u_j=\{u_{j,0}, u_{j,b}\}\in W_h(\Omega_j)$, such that $u_{j, b}=Q_b g$ on $\Gamma_j$, $\lambda_{jk}\in \Lambda_{jk}$, $j, k=1, \cdots, M$,
\begin{equation}\label{DD-Alg}
\left\{
\begin{split}
&(a\nabla_w u_j, \nabla_w v_j)_{\Omega_j}+s_{j}(u_j, v_j)-\sum_{k=1}^M\langle \lambda_{jk}, v_{j, b}\rangle_{\Gamma_{jk}}\\
&+(cu_{j,0}, v_{j,0})_{\Omega_j} =(f, v_{j,0})_{\Omega_j}, \qquad \forall v_j\in W_h^0(\Omega_j),\\
&\sum_{k=1}^M \langle \mu, \ljump u_b\rjump_{\Gamma_{jk}} \rangle_{\Gamma_{jk}} =0, \qquad\forall \mu\in \Lambda_{jk},\\
&\lambda_{jk}+\lambda_{kj}=0, \qquad \text{on}\  \Gamma_{jk},
\end{split}
\right.
\end{equation}
where $(\cdot, \cdot)_{\Omega_j}=\sum_{T\in {\cal T}_h^j} (\cdot, \cdot)_T$,
$s_{j}(u_j, v_j)=\sum_{T\in {\cal T}_h^j}s_T(u,v)$, and $\ljump u_b\rjump_{\Gamma_{jk}}$ is the jump of $u_{b}$ on $\Gamma_{jk}$ defined as follows:
\begin{equation}\label{jump}
  \ljump u_b\rjump_{\Gamma_{jk}}=
   u_{b,j}|_{\Gamma_{jk}\cap\Omega_{j}}-u_{b,k}|_{\Gamma_{kj}\cap\Omega_{k}},
 \end{equation}
 where $u_{b,j}|_{\Gamma_{jk}\cap\Omega_{j}}$ and $u_{b,k}|_{\Gamma_{kj}\cap\Omega_{k}}$ represent the values of $u_b$ on $\Gamma_{jk}$ as seen from $\Omega_{j}$ and $\Omega_{k}$ respectively.

\begin{lemma}
Let $u_j=\{u_{j,0}, u_{j,b}\}\in {W}_h(\Omega_j)$ and $\lambda_{jk} \in \Lambda_{jk}$ be the solution of the algorithm \eqref{DD-Alg}. Then, we have $\ljump u_b\rjump_{\Gamma_{jk}}=0$ for $j, k=1, \cdots, M$, so that $u_h|_{\Omega_j}=u_j$ is a function in the finite element space $W_h$.  Furthermore, this function $u_h$ satisfies the WG scheme (\ref{wg}). In other words, we have $u_h\equiv {\bar u}_h$.
\end{lemma}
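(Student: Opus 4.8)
The plan is to exploit the three groups of relations in the saddle-point system \eqref{DD-Alg} in turn: the multiplier constraint first forces the continuity of $u_b$ across the subdomain interfaces, and then the local equations sum to the global WG scheme once the multiplier terms are seen to cancel. First I would show that $\ljump u_b\rjump_{\Gamma_{jk}}=0$ on each interface. Since the one-sided traces $u_{b,j}$ and $u_{b,k}$ both lie in $P_{k-1}$, the jump $\ljump u_b\rjump_{\Gamma_{jk}}$ is itself an element of $\Lambda_{jk}$ and is therefore an admissible test object in the second equation of \eqref{DD-Alg}. Because each multiplier is supported on a single interface, testing against $\mu=\ljump u_b\rjump_{\Gamma_{jk}}$ (extended by zero off $\Gamma_{jk}$) collapses the sum over $k$ to the single term $\langle \ljump u_b\rjump_{\Gamma_{jk}}, \ljump u_b\rjump_{\Gamma_{jk}}\rangle_{\Gamma_{jk}}=0$, which gives $\ljump u_b\rjump_{\Gamma_{jk}}=0$.

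With the jump eliminated, $u_b$ is single-valued across every $\Gamma_{jk}$; since the continuity of $u_b$ inside each subdomain is already built into $W_h(\Omega_j)$ and $u_0$ carries no interelement continuity constraint, the glued function $u_h$ defined by $u_h|_{\Omega_j}=u_j$ is a genuine member of $W_h$. The boundary prescription $u_{j,b}=Q_bg$ on each $\Gamma_j$ assembles to $u_b=Q_bg$ on $\partial\Omega$, so in fact $u_h\in W_h^g$.

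It remains to check that $u_h$ solves \eqref{wg}. Given any $v\in W_h^0$, set $v_j=v|_{\Omega_j}$; since $v$ vanishes on $\partial\Omega$, each $v_j$ lies in $W_h^0(\Omega_j)$, so the first equation of \eqref{DD-Alg} applies. Summing that equation over $j=1,\dots,M$ and using the additivity of $a(\cdot,\cdot)$ and $s(\cdot,\cdot)$ over the subdomain partition produces $a(u_h,v)+s(u_h,v)-\sum_{j,k}\langle\lambda_{jk},v_{j,b}\rangle_{\Gamma_{jk}}=(f,v_0)$. The key observation is that the multiplier sum vanishes: each interface $\Gamma_{jk}$ is counted twice, once from $\Omega_j$ and once from $\Omega_k$, and because $v$ is globally single-valued the traces $v_{j,b}$ and $v_{k,b}$ coincide on $\Gamma_{jk}$; the two contributions therefore combine into $\langle\lambda_{jk}+\lambda_{kj},v_b\rangle_{\Gamma_{jk}}$, which is zero by the antisymmetry relation $\lambda_{jk}+\lambda_{kj}=0$. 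Hence $a(u_h,v)+s(u_h,v)=(f,v_0)$ for all $v\in W_h^0$, which is precisely \eqref{wg}, and the unique solvability of the WG scheme then forces $u_h\equiv\bar u_h$. The only genuine bookkeeping hurdle is organizing this double sum over interfaces so that the pairing against the antisymmetric multipliers is transparent; everything else is a direct consequence of the additivity of the forms and the single-valuedness enforced by the global spaces.
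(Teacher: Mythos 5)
Your proposal is correct and follows essentially the same route as the paper's own proof: test the multiplier constraint with $\mu=\ljump u_b\rjump_{\Gamma_{jk}}$ to kill the jump, then sum the local equations over the subdomains and use the antisymmetry $\lambda_{jk}+\lambda_{kj}=0$ together with the single-valuedness of $v_b$ to cancel the interface terms, concluding by uniqueness of the WG solution. The only difference is that you spell out the pairing of the two contributions on each interface more explicitly than the paper does.
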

\begin{proof}
By letting $\mu=\ljump u_b\rjump_{\Gamma_{jk}}\in \Lambda_{jk}$ and then using the second equation in \eqref{DD-Alg} we arrive at
$$
0=\sum_{j, k=1}^M\int_{\Gamma_{jk}} \ljump u_b\rjump_{\Gamma_{jk}}^2ds,
$$
which implies that $\ljump u_b\rjump_{\Gamma_{jk}}=0$ for $j, k=1, \cdots, M$.

Next, by restricting $v\in {W}_h^0$ in the first equation in \eqref{DD-Alg}
 and using the fact that $\lambda_{jk}+\lambda_{kj}=0$ we have
$$
\sum_{j, k=1}^M\langle \lambda_{jk}, v_{b}\rangle_{\Gamma_{jk}}=0,
$$ 
which leads to
$$
 a(u_h, v)+s(u_h, v)=(f, v_{0})\qquad \forall v\in W_h^0.
 $$
It follows from $u_{j, b}=Q_b g$ on $\Gamma_j$ that 
$u_h \in W_h^g$, which together with the last equation shows that $u_h$ is a solution of the WG scheme \eqref{wg}. Finally, from the solution uniqueness for \eqref{wg} we have $u_h\equiv {\bar u}_h$. This completes the proof.
\end{proof}

\section{A Parallel Iterative Scheme}\label{Section:IterativeScheme}
A parallel iterative scheme for the weak Galerkin finite element method can be designed by using the equivalent numerical scheme \eqref{DD-Alg}. The motivation of the iterative procedure comes from the observation that, in \eqref{DD-Alg}, the following consistency conditions 
\begin{equation}
\begin{split}
u_{j, b}&=u_{k, b},\qquad \text{on}\ \Gamma_{jk},\\
\lambda_{jk}&=-\lambda_{kj},\qquad \text{on}\  \Gamma_{jk},
\end{split}
\end{equation}
are equivalent to
\begin{equation}
\begin{split}
\beta u_{j, b}+\lambda_{jk}&=\beta u_{k, b}-\lambda_{kj},\qquad \text{on}\ \Gamma_{jk},\\
\beta u_{k, b}+\lambda_{kj}&=\beta u_{j, b}-\lambda_{jk},\qquad \text{on}\  \Gamma_{kj},
\end{split}
\end{equation}
for any non-zero function $\beta$ on $\bigcup_{j, k=1}^M\Gamma_{jk}$. This gives rise to
\begin{equation}\label{EQ:101}
\langle \lambda_{jk}, v_{j,b}\rangle_{\Gamma_{jk}}=\langle \beta(u_{k,b}-u_{j,b})-\lambda_{kj}, v_{j,b}\rangle_{\Gamma_{jk}},
\end{equation}
where $v_j=\{v_{j,0}, v_{j,b}\} \in W_h(\Omega_j)$. The use of the equation \eqref{EQ:101} in the numerical scheme \eqref{DD-Alg} is a critical step in the design of the following parallel iterative procedure.

Starting from any initial guess $u_j^{(0)}=\{u_{j,0}^{(0)}, u_{j,b}^{(0)}\}\in W_h(\Omega_j)$ with $u_{j, b}^{(0)}=Q_bg$ on $\Gamma_j$, $\lambda^{(0)}_{jk}\in \Lambda_{jk}$ and $\lambda^{(0)}_{kj}\in \Lambda_{kj}$, we solve for $u_j^{(n)}=\{u_{j,0}^{(n)}, u_{j,b}^{(n)}\}\in W_h(\Omega_j)$, $\lambda^{(n)}_{jk}\in \Lambda_{jk}$, and $\lambda^{(n)}_{kj}\in \Lambda_{kj}$ such that $u_{j, b}^{(n)}=Q_bg$ on $\Gamma_j$ and satisfying the following system of linear equations
\begin{equation}\label{iter}
\left\{
\begin{split}
&(a\nabla_w u_j^{(n)}, \nabla_w v_j)_{\Omega_j}+s_{j}(u_j^{(n)}, v_j)+\sum_{k=1}^M\langle \beta u_{j, b}^{(n)}, v_{j, b}\rangle_{\Gamma_{jk}}+(cu_{j, 0}^{(n)}, v_{j, 0})_{\Omega_j}\\
&=\sum_{k=1}^M \langle \beta u_{k, b}^{(n-1)}-\lambda_{kj}^{(n-1)}, v_{j, b}\rangle_{\Gamma_{jk}}+(f, v_{j, 0})_{\Omega_j}\quad   \forall v_j\in W_h^0(\Omega_j), \\
&\lambda_{jk}^{(n)}=\beta (u_{k, b}^{(n-1)}-u_{j, b}^{(n)})-\lambda_{kj}^{(n-1)}.
\end{split}\right.
\end{equation}

\section{Convergence}\label{Section:Convergence}
Let us establish the convergence for the iteration procedure defined in \eqref{iter}. To this end, for $e=\{e_j: \ \ e_j\in W_h(\Omega_j)\}$ and $\mu=\{\mu_{jk}:\ \mu_{jk}\in \Lambda_{jk}\}$, denote by 
\begin{eqnarray*}
(a\nabla_w e, \nabla_w e)&=&\sum_{j=1}^M (a\nabla_w e_j, \nabla_w e_j)_{\Omega_j},\\
(ce_{0}, e_{0})&=&\sum_{j=1}^M(ce_{j,0}, e_{j,0})_{\Omega_j},\\
s(e, e)&=&\sum_{j=1}^M s_j(e_j, e_{j}).
\end{eqnarray*}
Moreover, denote by
\begin{eqnarray*}
E(\{e, \mu\})&=&\sum_{j=1}^M \beta^2 |e_{j, b}|_{0, B_j}^2+\sum_{j,k=1}^M|\mu_{jk}|_{0, \Gamma_{jk}}^2\\
& & +2\beta\{(a\nabla_w e, \nabla_w e)+s(e, e)+(ce_{0}, e_{0})\},
\end{eqnarray*}
where $B_j=\partial\Omega_j\setminus \Gamma_j$.

Let $\{u_j, \lambda_{jk}\}$ be the solution of the domain-decomposition-based numerical scheme \eqref{DD-Alg}. We define the error functions:
\begin{equation}\label{EQ:21}
e_j^{(n)}=u_j-u_j^{(n)}, \qquad \mu_{jk}=\lambda_{jk}-\lambda_{jk}^{(n)}, \qquad \mu_{kj}=\lambda_{kj}-\lambda_{kj}^{(n)}.
\end{equation}
The error equations for the iterative procedure \eqref{iter} can be written in the form:
\begin{equation}\label{error}
\left\{
\begin{split}
&(a\nabla_w e_j^{(n)}, \nabla_w v_j)_{\Omega_j}+s_{j}(e_j^{(n)}, v_j)-\sum_{k=1}^M\langle \mu_{jk}^{(n)}, v_{j, b}\rangle_{\Gamma_{jk}}\\&+(ce_{j, 0}^{(n)}, v_{j, 0})_{\Omega_j}= 0, \qquad \forall v_j\in W_h(\Omega_j),\\
&\mu_{jk}^{(n)}=\beta(e_{k, b}^{(n-1)}-e_{j, b}^{(n)})-\mu_{kj}^{(n-1)},
\end{split}\right.
\end{equation} 

\begin{lemma}\label{Lemma:6-1}
Let  $E^{(n)}=E(\{e^{(n)}, \mu^{(n)}\})$. The following identities hold true
\begin{equation}\label{en:10}
\begin{split}
& E^{(n-1)}-E^{(n)} \\
=&4\beta\{(a\nabla_w e^{(n-1)}, \nabla_w e^{(n-1)})+s(e^{(n-1)}, e^{(n-1)})+(ce_{0}^{(n-1)}, e_{0}^{(n-1)})\},
\end{split}
\end{equation}
and
\begin{equation}\label{en:08}
\begin{split}
E^{(n)}=&\sum_{k=1}^M|\beta e_{j, b}^{(n)}+\mu_{jk}^{(n)}|^2_{0, \Gamma_{jk}}\\
=&\beta^2 |e_{j, b}^{(n)}|_{0, B_j}^2+\sum_{k=1}^M|\mu_{jk}^{(n)}|_{0, \Gamma_{jk}}^2 + 2\beta \sum_{k=1}^M \langle e_{j, b}^{(n)}, \mu_{jk}^{(n)} \rangle_{\Gamma_{jk}}\\
=& \beta^2 |e_{j, b}^{(n)}|_{0, B_j}^2+\sum_{k=1}^M|\mu_{jk}^{(n)}|_{0, \Gamma_{jk}}^2\\
&+2\beta\{(a\nabla_w e_j^{(n)}, \nabla_w e_j^{(n)})_{\Omega_j}+s_{j}(e_j^{(n)}, e_j^{(n)})+(ce_{j, 0}^{(n)}, e_{j, 0}^{(n)})_{\Omega_j}\}.
\end{split}
\end{equation}
\end{lemma}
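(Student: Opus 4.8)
The plan is to establish \eqref{en:08} first and then to use it as the engine driving the telescoping identity \eqref{en:10}. For \eqref{en:08}, the first equality is purely algebraic: expanding the square $|\beta e_{j,b}^{(n)}+\mu_{jk}^{(n)}|_{0,\Gamma_{jk}}^2$ produces $\beta^2|e_{j,b}^{(n)}|_{0,\Gamma_{jk}}^2$, $|\mu_{jk}^{(n)}|_{0,\Gamma_{jk}}^2$, and the cross term $2\beta\langle e_{j,b}^{(n)},\mu_{jk}^{(n)}\rangle_{\Gamma_{jk}}$. Summing over $k$ and recalling $B_j=\partial\Omega_j\setminus\Gamma_j=\bigcup_k\Gamma_{jk}$, the first family assembles into $\beta^2|e_{j,b}^{(n)}|_{0,B_j}^2$. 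For the second equality I would test the first error equation in \eqref{error} with $v_j=e_j^{(n)}$; by symmetry of the diffusion, stabilizer, and reaction terms this yields $\sum_k\langle\mu_{jk}^{(n)},e_{j,b}^{(n)}\rangle_{\Gamma_{jk}}=(a\nabla_w e_j^{(n)},\nabla_w e_j^{(n)})_{\Omega_j}+s_j(e_j^{(n)},e_j^{(n)})+(ce_{j,0}^{(n)},e_{j,0}^{(n)})_{\Omega_j}$, and substituting this into the cross term finishes \eqref{en:08}.

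For \eqref{en:10}, the key observation is that the transmission condition in \eqref{error} rearranges to $\beta e_{j,b}^{(n)}+\mu_{jk}^{(n)}=\beta e_{k,b}^{(n-1)}-\mu_{kj}^{(n-1)}$. With the shorthand $q_{jk}^{(n)}:=\beta e_{j,b}^{(n)}+\mu_{jk}^{(n)}$ this reads $q_{jk}^{(n)}=\beta e_{k,b}^{(n-1)}-\mu_{kj}^{(n-1)}$, which differs from $q_{kj}^{(n-1)}=\beta e_{k,b}^{(n-1)}+\mu_{kj}^{(n-1)}$ only by the sign in front of $\mu_{kj}^{(n-1)}$. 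Expanding both squares and subtracting, the quadratic pieces $\beta^2|e_{k,b}^{(n-1)}|_{0,\Gamma_{jk}}^2$ and $|\mu_{kj}^{(n-1)}|_{0,\Gamma_{jk}}^2$ cancel, and one is left with the pointwise identity $|q_{jk}^{(n)}|_{0,\Gamma_{jk}}^2-|q_{kj}^{(n-1)}|_{0,\Gamma_{jk}}^2=-4\beta\langle e_{k,b}^{(n-1)},\mu_{kj}^{(n-1)}\rangle_{\Gamma_{jk}}$.

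The remaining work is summation bookkeeping. Summing this pointwise identity over all pairs $(j,k)$ and invoking \eqref{en:08}, the left side becomes $E^{(n)}-E^{(n-1)}$: indeed $\sum_{j,k}|q_{jk}^{(n)}|_{0,\Gamma_{jk}}^2$ is exactly $E^{(n)}$, while $\sum_{j,k}|q_{kj}^{(n-1)}|_{0,\Gamma_{jk}}^2$ equals $E^{(n-1)}$ after the index swap $j\leftrightarrow k$, which is legitimate because $\Gamma_{jk}=\Gamma_{kj}$. The same swap turns $-4\beta\sum_{j,k}\langle e_{k,b}^{(n-1)},\mu_{kj}^{(n-1)}\rangle_{\Gamma_{jk}}$ into $-4\beta\sum_{j,k}\langle e_{j,b}^{(n-1)},\mu_{jk}^{(n-1)}\rangle_{\Gamma_{jk}}$, and applying \eqref{en:08} at level $n-1$ (equivalently, testing the error equation with $v_j=e_j^{(n-1)}$) converts this into $-4\beta$ times $(a\nabla_w e^{(n-1)},\nabla_w e^{(n-1)})+s(e^{(n-1)},e^{(n-1)})+(ce_0^{(n-1)},e_0^{(n-1)})$. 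Rearranging gives \eqref{en:10}.

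The main obstacle is not any single computation but keeping the index bookkeeping exactly straight: one must track which subdomain each multiplier and boundary trace is seen from, so that the sign flip in the transmission condition lands on $\mu_{kj}^{(n-1)}$ rather than $\mu_{jk}^{(n-1)}$, and so that the two symmetric swaps on $\Gamma_{jk}=\Gamma_{kj}$ correctly identify $\sum_{j,k}|q_{kj}^{(n-1)}|_{0,\Gamma_{jk}}^2$ with $E^{(n-1)}$ and realign the cross term. This is precisely where the design of the transmission condition \eqref{EQ:101} pays off: the reflection sending $+\mu$ to $-\mu$ is what makes the quadratic terms cancel and leaves the clean, sign-definite dissipation \eqref{en:10}.
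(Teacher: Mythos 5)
Your proposal is correct and follows essentially the same route as the paper: expand the square defining $E^{(n)}$, test the error equation with $v_j=e_j^{(n)}$ to rewrite the cross term, and use the multiplier update $\beta e_{j,b}^{(n)}+\mu_{jk}^{(n)}=\beta e_{k,b}^{(n-1)}-\mu_{kj}^{(n-1)}$ together with the $j\leftrightarrow k$ swap on $\Gamma_{jk}=\Gamma_{kj}$ to obtain the dissipation identity. Your explicit bookkeeping of the sign flip and the index swap is, if anything, slightly more careful than the paper's own writeup.
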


\begin{proof} Note that $e_{j,b}^{(n)}=0$ on $\Gamma_j$. 
By letting $v_j=e_j^{(n)}$ in the first equation of \eqref{error} we obtain
\begin{equation}\label{t1_1}
(a\nabla_w e_j^{(n)}, \nabla_w e_j^{(n)})_{\Omega_j}+s_{j}(e_j^{(n)}, e_j^{(n)})-\sum_{k=1}^M \langle \mu_{jk}^{(n)}, e_{j, b}^{(n)}\rangle_{\Gamma_{jk}}+(ce_{j, 0}^{(n)}, e_{j, 0}^{(n)})_{\Omega_j}=0.
\end{equation}
 From \eqref{t1_1} we have
\begin{equation*}
\begin{split}
E^{(n)}=&\sum_{k=1}^M|\beta e_{j, b}^{(n)}+\mu_{jk}^{(n)}|^2_{0, \Gamma_{jk}}\\
=&\beta^2 |e_{j, b}^{(n)}|_{0, B_j}^2+\sum_{k=1}^M|\mu_{jk}^{(n)}|_{0, \Gamma_{jk}}^2 + 2\beta \sum_{k=1}^M \langle e_{j, b}^{(n)}, \mu_{jk}^{(n)} \rangle_{\Gamma_{jk}}\\
=& \beta^2 |e_{j, b}^{(n)}|_{0, B_j}^2+\sum_{k=1}^M|\mu_{jk}^{(n)}|_{0, \Gamma_{jk}}^2\\
&+2\beta\{(a\nabla_w e_j^{(n)}, \nabla_w e_j^{(n)})_{\Omega_j}+s_{j}(e_j^{(n)}, e_j^{(n)})+(ce_{j, 0}^{(n)}, e_{j, 0}^{(n)})_{\Omega_j}\},
\end{split}
\end{equation*}
which verifies the identify \eqref{en:08}.

Next, from the second equation in \eqref{error} we have
\begin{equation}\label{en}
\begin{split}
E^{(n)}=&\sum_{j=1}^M\sum_{k=1}^M|\beta e_{j, b}^{(n)} + \mu_{jk}^{(n)}|^2_{0, \Gamma_{jk}}\\
=&\sum_{k=1}^M\sum_{j=1}^M|\beta e_{k, b}^{(n-1)}-\mu_{kj}^{(n-1)}|^2_{0, \Gamma_{kj}}\\
=& E^{(n-1)}-4\beta\sum_{k=1}^M\{(a\nabla_w e_k^{(n-1)}, \nabla_w e_k^{(n-1)})_{\Omega_k}+s_{k}(e_k^{(n-1)}, e_k^{(n-1)})\\
&+(ce_{k, 0}^{(n-1)}, e_{k, 0}^{(n-1)})_{\Omega_k}\},
\end{split}
\end{equation}
which leads to the identity \eqref{en:10}.
\end{proof}

\begin{theorem}
Let $\{u_j^{(n)}, \lambda_{jk}^{(n)}\}$ be the solution of the iterative scheme \eqref{iter} and $\{e_j^{(n)}, \mu_{jk}^{(n)}\}$ be the error functions defined in \eqref{EQ:21} for the numerical scheme \eqref{DD-Alg}. For any $\beta >0$, the following convergence holds true:
\begin{eqnarray}
u_j^{(n)}\to u_j,\label{EQ:e-conv}\\
\lambda_{jk}^{(n)}\to \lambda_{jk} \label{EQ:mu-conv}
\end{eqnarray}
as $n\to \infty$.
\end{theorem}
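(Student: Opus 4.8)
The plan is to turn the energy identity \eqref{en:10} into a telescoping/monotonicity argument and then to convert the resulting energy decay into convergence of both the primal errors $e_j^{(n)}$ and the multiplier errors $\mu_{jk}^{(n)}$. First I would observe that, since $\beta>0$, $a\geq a_0>0$, $c\geq 0$ and $s_j(\cdot,\cdot)$ is a nonnegative quadratic form, every term in the definition of $E(\{e,\mu\})$ is nonnegative, so $E^{(n)}\geq 0$ for all $n$. The identity \eqref{en:10} then shows $E^{(n-1)}-E^{(n)}=4\beta(\cdots)\geq 0$, i.e. $\{E^{(n)}\}$ is nonincreasing and bounded below, hence convergent; consequently $E^{(n-1)}-E^{(n)}\to 0$. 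Feeding this back into \eqref{en:10} and dividing by $4\beta$ gives
\[
\sum_{j=1}^M\Big[(a\nabla_w e_j^{(n)},\nabla_w e_j^{(n)})_{\Omega_j}+s_j(e_j^{(n)},e_j^{(n)})+(ce_{j,0}^{(n)},e_{j,0}^{(n)})_{\Omega_j}\Big]\to 0 .
\]
Because each summand is nonnegative, each tends to zero individually, so for every $j$ the weak-gradient energy and the stabilizer of $e_j^{(n)}$ vanish in the limit.

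Next I would recover \eqref{EQ:mu-conv}. The first equation of \eqref{error} expresses $\sum_k\langle \mu_{jk}^{(n)},v_{j,b}\rangle_{\Gamma_{jk}}$ as a bounded bilinear functional of $e_j^{(n)}$ and $v_j$, controlled by the (now vanishing) energy seminorm of $e_j^{(n)}$ via Cauchy--Schwarz. On the fixed partition one may pick a discrete extension $v_j\in W_h(\Omega_j)$ whose boundary datum equals $\mu_{jk}^{(n)}$ on $\Gamma_{jk}$ and vanishes elsewhere, with energy seminorm bounded by $C|\mu_{jk}^{(n)}|_{0,\Gamma_{jk}}$; testing with this $v_j$ yields $|\mu_{jk}^{(n)}|_{0,\Gamma_{jk}}\le C\,\big(\text{energy of }e_j^{(n)}\big)^{1/2}\to 0$. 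Hence $\mu_{jk}^{(n)}\to 0$, which by \eqref{EQ:21} is exactly $\lambda_{jk}^{(n)}\to\lambda_{jk}$.

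For \eqref{EQ:e-conv} I would split according to whether a subdomain meets $\partial\Omega$. On subdomains with $\Gamma_j\neq\emptyset$ the boundary condition $e_{j,b}^{(n)}=0$ on $\Gamma_j$ places $e_j^{(n)}\in W_h^0(\Omega_j)$, where the standard weak Galerkin Poincar\'e inequality makes the weak-gradient-plus-stabilizer seminorm a genuine norm; vanishing energy therefore forces $e_j^{(n)}\to 0$. \emph{The hard part is the interior subdomains} with $\Gamma_j=\emptyset$: when $c$ degenerates, the seminorm has the one–dimensional kernel of weak Galerkin constants, and the energy decay only gives a decomposition $e_j^{(n)}=c_j^{(n)}+r_j^{(n)}$ with $r_j^{(n)}\to 0$ but $c_j^{(n)}$ a priori uncontrolled. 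This kernel/constant mode is the genuine obstacle, since the dissipation in \eqref{en:10} is blind to it.

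To close this gap I would propagate boundary information through the interface recursion, the second equation of \eqref{error}. Rearranging it gives $\beta\big(e_{j,b}^{(n)}-e_{k,b}^{(n-1)}\big)=-\mu_{jk}^{(n)}-\mu_{kj}^{(n-1)}\to 0$ on each $\Gamma_{jk}$, using the multiplier convergence just established; since the constant parts determine the interface traces up to the vanishing remainders $r_j^{(n)}$, this reads $c_j^{(n)}-c_k^{(n-1)}\to 0$ across every interface. Inducting on the graph distance from a subdomain to the boundary-touching ones (whose constants already tend to zero), and using the connectedness of $\Omega$ together with the finiteness of $M$, one concludes $c_j^{(n)}\to 0$ for every $j$, whence $e_j^{(n)}\to 0$ and thus $u_j^{(n)}\to u_j$. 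The only delicate point throughout is this handling of the constant modes on interior subdomains in the possibly-degenerate case $c\equiv 0$; everything else is monotonicity of $E^{(n)}$ plus coercivity/extension estimates on a fixed finite-dimensional space.
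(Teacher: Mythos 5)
Your proposal is correct, and its first two stages coincide with the paper's: you use Lemma \ref{Lemma:6-1} to get monotone decay of $E^{(n)}$ and hence vanishing of the dissipation terms, and you prove $\mu_{jk}^{(n)}\to 0$ by testing the first equation of \eqref{error} with a discrete lifting of $\mu_{jk}^{(n)}$ supported on $\Gamma_{jk}$ --- exactly the paper's $v_j^*$ and estimate \eqref{error-02} (your lifting constant hides an $h^{-1/2}$, which is harmless since the mesh is fixed). Where you genuinely depart from the paper is the final step, and you correctly identify it as the hard one. The paper argues by compactness: from \eqref{en:08} the traces $e_{j,b}^{(n)}$ are bounded on $B_j$, so by Lemma \ref{lemma:6-4} and Lemma \ref{lemnorm} the sequence $\{e_j^{(n)}\}$ is bounded in the norm $\3bar\cdot\3bar_{1,\Omega_j}$ on a finite-dimensional space; one extracts a convergent subsequence, passes to the limit in \eqref{error} using $\mu_{jk}^{(n)}\to 0$ to obtain the homogeneous system \eqref{error-88}, and concludes $e^*\equiv 0$ from the uniqueness of the global WG solution. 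You instead decompose $e_j^{(n)}$ on each interior subdomain into its constant (kernel) mode plus a remainder annihilated by the dissipation, and then propagate the vanishing of the constants inward from the boundary-touching subdomains via the interface recursion $\beta(e_{j,b}^{(n)}-e_{k,b}^{(n-1)})=-\mu_{jk}^{(n)}-\mu_{kj}^{(n-1)}\to 0$, inducting on graph distance to $\partial\Omega$. Both routes are valid. The paper's is shorter and leans on soft finite-dimensional compactness plus well-posedness of the discrete global problem (and its ``WLOG the whole sequence converges'' is slightly informal); yours is more constructive, makes the kernel of the local energy seminorm explicit, avoids subsequences entirely, and could in principle be quantified into a rate. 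The only caveats for your version are implicit assumptions that the paper also needs: each $\Omega_j$ is connected (so the kernel is one-dimensional), the subdomain adjacency graph is connected through interfaces of positive $(d-1)$-measure, and the base case of your induction requires $\Gamma_j$ to have positive measure (a subdomain touching $\partial\Omega$ only at a corner must be treated as interior). None of these is a genuine gap.
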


\begin{proof}
Note that, from Lemma \ref{Lemma:6-1}, $\{E^{(n)}\}$ is a decreasing sequence of nonnegative numbers. Thus, we have
\begin{equation}
\begin{split}
\sum_{n=0}^\infty \{(a\nabla_w e^{(n)}, \nabla_w e^{(n)})+s(e^{(n)}, e^{(n)})+(ce_{0}^{(n)}, e_{0}^{(n)}) \} <\infty.
\end{split}
\end{equation}
Note that $a>0$.  If $c>0$, it is easy to obtain $e_{0}^{(n)} \to 0$ on each subdomain $\Omega_j$, which, using $s(e^{(n)}, e^{(n)})\to 0$, gives $e_b^{(n)}\to 0$ and further $e^{(n)}\to 0$ as $n\to \infty$. 
For the case of $c\ge 0$, the above argument would not go through so that new approaches are necessary. The rest of the proof assumes the general case of $c\ge 0$.

For $\mu^{(n)}=\{\mu_{jk}^{(n)}\}$, we construct $v^* = \{v^*_j\}$, where $v_j^*\in W_h(\Omega_j)$ assumes the value of $\mu_{jk}^{(n)}$ on $\Gamma_{jk}$ and zero otherwise. It is easy to show that 
\begin{equation}\label{error-02}
\|\nabla_w v^*_j\|_{\Omega_j}^2 + s_j(v^*_j, v^*_j) \leq C h^{-1} \sum_{k=1}^M \|\mu_{jk}^{(n)}\|_{\Gamma_{jk}}^2.
\end{equation}
Substituting $v_j$ by $v_j^*$ in \eqref{error} yields
\begin{equation}\label{error-01}
(a\nabla_w e_j^{(n)}, \nabla_w v_j^*)_{\Omega_j}+s_{j}(e_j^{(n)}, v_j^*)-\sum_{k=1}^M\|\mu_{jk}^{(n)}\|_{\Gamma_{jk}}^2 = 0.
\end{equation}
Hence,
\begin{equation*}
\begin{split}
\sum_{k=1}^M\|\mu_{jk}^{(n)}\|_{\Gamma_{jk}}^2 = & (a\nabla_w e_j^{(n)}, \nabla_w v_j^*)_{\Omega_j}+s_{j}(e_j^{(n)}, v_j^*) \\  
\leq &C (a\nabla_w e_j^{(n)}, \nabla_w e_j^{(n)})^{1/2}_{\Omega_j} \|\nabla_w v^*_j\|_{\Omega_j} + s_{j}(e_j^{(n)}, e_j^{(n)})^{1/2} s_j(v^*_j, v^*_j)^{1/2}.
\end{split}
\end{equation*}
Using the estimate \eqref{error-02} in the above inequality gives
\begin{equation*}
\begin{split}
\sum_{k=1}^M\|\mu_{jk}^{(n)}\|_{\Gamma_{jk}}^2 = & (a\nabla_w e_j^{(n)}, \nabla_w v_j^*)_{\Omega_j}+s_{j}(e_j^{(n)}, v_j^*) \\  
\leq &Ch^{-1}\{ (a\nabla_w e_j^{(n)}, \nabla_w e_j^{(n)}) + s_{j}(e_j^{(n)}, e_j^{(n)})\}.
\end{split}
\end{equation*}
It follows that $\sum_{j,k=1}^M\|\mu_{jk}^{(n)}\|_{\Gamma_{jk}}^2 \to 0$ as $n\to \infty$, which asserts the convergence for the Lagrangian multiplier $\lambda_{jk}^{(n)}$.

To prove the convergence of $u_j^{(n)}$, we note from \eqref{en:08} that $\{e_{j,b}^{(n)}\}$ is a bounded sequence in $L^2(B_j)$, and hence from Lemma \ref{lemma:6-4}, $\{e_{j}^{(n)}\}$ is a bounded sequence with respect to the norm $\3bar\cdot\3bar_{1,\Omega_j}$. It follows that $\{e_{j}^{(n)}\}$ must have a convergent subsequence. Without loss of generality, we may assume that $\{e_{j}^{(n)}\}$ is convergent so that
$$
e_{j}^{(n)} \to e^*_j\quad \mbox{as $n\to \infty$.}
$$
By passing to the limit of $n\to\infty$ and using the face that $\mu_{jk}^{(n)} \to 0$, from \eqref{error} we obtain the following equations:
\begin{equation}\label{error-88}
\begin{split}
&(a\nabla_w e_j^*, \nabla_w v_j)_{\Omega_j}+s_{j}(e_j^*, v_j)+(ce_{j, 0}^*, v_{j, 0})_{\Omega_j}= 0, \qquad \forall v_j\in W_h(\Omega_j),\\
&e_{j, b}^*=e_{k, b}^*,\quad \mbox{on $\Gamma_{jk}$},
\end{split}
\end{equation} 
which implies that $e_j^* \equiv 0$ for $j=1,\cdots, M$. This completes the proof of the theorem.
\end{proof}

The rest of this sections is concerned with two technical results that support the convergence analysis for the parallel iterative procedure.

\begin{lemma}\label{lemma:6-4}
\label{equiv}There exists $C_1>0$ and $C_2>0$ such that
\begin{eqnarray*}
C_1((\nabla v_{0}, \nabla v_{0})_{\Omega_j}+s_j(v, v))\leq  (\nabla_w v, \nabla_w v)_{\Omega_j}+s_j(v, v)\\
\leq C_2((\nabla v_{0}, \nabla v_{0})+s_j(v, v))
\end{eqnarray*}
for any $v\in W_h(\Omega_j)$.
\end{lemma}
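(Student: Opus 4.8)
The plan is to reduce everything to an element-by-element comparison between the discrete weak gradient $\nabla_w v$ and the classical gradient $\nabla v_0$ of the interior component. The starting point is that, since $v_0\in P_k(T)$, its gradient $\nabla v_0$ lies in $[P_{k-1}(T)]^d$, which is exactly the space in which $\nabla_w v=\nabla_{w,k-1,T}(v|_T)$ is defined and takes values. Integrating by parts on $T$ gives $(\nabla v_0,\bw)_T=-(v_0,\nabla\cdot\bw)_T+\langle v_0,\bw\cdot\bn\rangle_{\partial T}$ for any $\bw$, and subtracting this from the defining relation \eqref{disgradient} yields the identity
\begin{equation*}
(\nabla_w v,\bw)_T=(\nabla v_0,\bw)_T+\langle v_b-v_0,\bw\cdot\bn\rangle_{\partial T},\qquad \forall\,\bw\in[P_{k-1}(T)]^d.
\end{equation*}
Because both $\nabla_w v$ and $\nabla v_0$ are admissible choices for $\bw$, this single identity will produce both inequalities.

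The key manipulation is in the boundary term. On each edge/face $e\subset\partial T$, the normal flux $\bw\cdot\bn$ of a vector in $[P_{k-1}(T)]^d$ restricts to a polynomial in $P_{k-1}(e)$, so the $L^2$-orthogonality $\langle v_0-Q_bv_0,\,\bw\cdot\bn\rangle_e=0$ lets me replace $v_0$ by $Q_bv_0$ for free, giving $\langle v_b-v_0,\bw\cdot\bn\rangle_{\partial T}=\langle v_b-Q_bv_0,\bw\cdot\bn\rangle_{\partial T}$. Combining Cauchy--Schwarz with the polynomial trace/inverse inequality $\|\bw\cdot\bn\|_{\partial T}\le Ch_T^{-1/2}\|\bw\|_T$ (valid on the shape-regular partition) and the identity $\|Q_bv_0-v_b\|_{\partial T}^2=h_T\,s_T(v,v)$ then bounds the boundary term by $C\,s_T(v,v)^{1/2}\|\bw\|_T$. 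Choosing $\bw=\nabla_w v$ and dividing through gives $\|\nabla_w v\|_T\le\|\nabla v_0\|_T+C\,s_T(v,v)^{1/2}$, while choosing $\bw=\nabla v_0$ gives the symmetric bound $\|\nabla v_0\|_T\le\|\nabla_w v\|_T+C\,s_T(v,v)^{1/2}$.

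To finish, I would square each of these two estimates (using $(p+q)^2\le 2p^2+2q^2$), add the stabilizer $s_T(v,v)$ to both sides, and sum over all $T\in\T_h^j$. The first estimate yields the upper bound with $C_2=2C^2+1$ and the second the lower bound with $C_1=1/(2C^2+1)$, recovering the claimed equivalence on $\Omega_j$. I expect the only delicate point to be the boundary term: one must verify the degree count so that $\bw\cdot\bn|_e\in P_{k-1}(e)$ (ensuring $Q_b$ passes through the pairing), and must keep the powers of $h_T$ consistent between the trace inequality and the definition of $s_T$, so that the cross term is controlled \emph{exactly} by the stabilizer rather than by an uncontrolled boundary norm of $v_0$. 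Everything else is a routine application of Cauchy--Schwarz and summation over elements.
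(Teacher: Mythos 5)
Your proposal is correct and follows essentially the same route as the paper: the identity $(\nabla_w v,\bw)_T=(\nabla v_0,\bw)_T+\langle v_b-Q_bv_0,\bw\cdot\bn\rangle_{\partial T}$ obtained by integration by parts, followed by Cauchy--Schwarz and the trace/inverse inequality, with the two choices $\bw=\nabla_w v$ and $\bw=\nabla v_0$ giving the upper and lower bounds respectively. Your version is in fact slightly more careful than the paper's, which inserts $Q_b$ without comment and dismisses the lower bound as ``analogous,'' whereas you justify the orthogonality step via the degree count for $\bw\cdot\bn|_e\in P_{k-1}(e)$ and make the reverse estimate explicit.
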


\begin{proof}
For each $T\in \T_h^j$, using \eqref{disgradient} and the usual integration by parts yields
\begin{equation*}
\begin{split}
(\nabla_w v, \bw)_T=&-(v_0, \nabla \cdot\bw)_T+\langle v_b, \bw\cdot\bn\rangle_{\partial T}\\
=&(\nabla v_0, \bw)_T+\langle v_b - Q_b v_0, \bw\cdot\bn\rangle_{\partial T}.
\end{split}
\end{equation*}
Now from the Cauchy-Schwarz inequality and the trace inequality we obtain
\begin{equation*}
\begin{split}
\|\nabla_w v\|_T\leq &\frac{\|\nabla v_0\|_T \|\bw\|_T+\|Q_bv_0-v_b\|_{\pT} \|\bw\cdot\bn\|_{\partial T}}{\|\bw\|_T}\\
\leq &C_2\frac{\|\nabla v_0\|_T \|\bw\|_T+\|Q_bv_0-v_b\|_{\pT} h_T^{-\frac{1}{2}}\|\bw\|_{T}}{\|\bw\|_T}\\
\leq& C_2(\|\nabla v_0\|_T+h_T^{-\frac{1}{2}}\|Q_bv_0-v_b\|_{\pT}).
\end{split}
\end{equation*}
Summing over all $T\in \T_h^j$ gives rise to the upper-bound estimate of $(\nabla_w v, \nabla_w v)_{\Omega_j}+s_j(v, v)$. 
The lower-bound estimate of $(\nabla_w v, \nabla_w v)_{\Omega_j}+s_j(v, v)$ can be established analogously.
This completes the proof of the lemma.
\end{proof}

For $v_j=\{v_{j,0}, v_{j,b}\}\in {W}_h(\Omega_j)$, we define a semi-norm by setting
\begin{equation}\label{norm}
\3bar v_j \3bar^2_{1,\Omega_j}=\sum_{T\in \T_h^j}(a\nabla v_{j,0}, \nabla  v_{j,0})_T +s_j(v_j, v_j) + \|v_{j,b}\|^2_{\partial\Omega_j}.
\end{equation}

\begin{lemma}\label{lemnorm} The semi-norm $\3bar \cdot \3bar_{1,\Omega_j}$ defined in \eqref{norm} is a norm in ${W}_h(\Omega_j)$.
\end{lemma}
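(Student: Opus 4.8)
The plan is to observe first that $\3bar\cdot\3bar_{1,\Omega_j}^2$ is assembled from three nonnegative quadratic quantities — the weighted Dirichlet energy $\sum_{T\in\T_h^j}(a\nabla v_{j,0},\nabla v_{j,0})_T$, the stabilizer $s_j(v_j,v_j)$, and the boundary term $\|v_{j,b}\|_{\partial\Omega_j}^2$ — each of which is the square of an $L^2$-type seminorm. Consequently $\3bar\cdot\3bar_{1,\Omega_j}$ is automatically nonnegative, positively homogeneous, and obeys the triangle inequality, since it is induced by a symmetric positive semidefinite bilinear form on $W_h(\Omega_j)$. The only axiom requiring genuine work is positive definiteness, so I would reduce the proof to showing that $\3bar v_j\3bar_{1,\Omega_j}=0$ forces $v_j=\{v_{j,0},v_{j,b}\}=0$.

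Next I would extract the pointwise consequences of the vanishing of each term. Because $a\ge a_0>0$, the condition $\sum_{T}(a\nabla v_{j,0},\nabla v_{j,0})_T=0$ gives $\nabla v_{j,0}=0$ on every $T\in\T_h^j$, so $v_{j,0}$ is constant on each element, say $v_{j,0}|_T=c_T$. The vanishing of $s_j(v_j,v_j)=\sum_T h_T^{-1}\langle Q_bv_{j,0}-v_{j,b},Q_bv_{j,0}-v_{j,b}\rangle_{\pT}$ yields $Q_bv_{j,0}=v_{j,b}$ on $\pT$ for every $T$, while the vanishing of $\|v_{j,b}\|_{\partial\Omega_j}^2$ yields $v_{j,b}=0$ on $\partial\Omega_j$. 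Since $k\ge1$, the constant $c_T$ lies in $P_{k-1}(e)$, so $Q_bc_T=c_T$ and hence $v_{j,b}=c_T$ on each edge/face $e\subset\pT$.

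The heart of the argument, which I expect to be the main obstacle, is the propagation of these elemental constants to a single global value. Recall that $W_h(\Omega_j)$ is the restriction to $\Omega_j$ of the global space $W_h$, in which $v_b$ is single-valued across interior edges/faces; thus for two elements $T,T'\in\T_h^j$ sharing an interior edge one has $c_T=v_{j,b}=c_{T'}$, and walking through the connected partition $\T_h^j$ of $\Omega_j$ forces all the elemental constants to coincide with one value $c$, so that $v_{j,0}\equiv c$ and $v_{j,b}\equiv c$ on the whole skeleton. Finally, any element $T$ abutting $\partial\Omega_j$ carries an edge $e\subset\partial\Omega_j$ on which simultaneously $v_{j,b}=c$ and $v_{j,b}=0$, whence $c=0$ and therefore $v_{j,0}\equiv0$, $v_{j,b}\equiv0$, i.e. $v_j=0$. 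I would close by remarking that the boundary term $\|v_{j,b}\|_{\partial\Omega_j}^2$ is precisely what anchors the otherwise free additive constant, which is why $\3bar\cdot\3bar_{1,\Omega_j}$ is a genuine norm even though it contains no zeroth-order ($c$) contribution — exactly the feature needed to treat the case $c\ge0$ in the convergence theorem.
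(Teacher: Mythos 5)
Your proposal is correct and follows essentially the same route as the paper: reduce to positive definiteness, deduce from the vanishing of the three terms that $v_{j,0}$ is elementwise constant with $Q_bv_{j,0}=v_{j,b}$ on each $\pT$ and $v_{j,b}=0$ on $\partial\Omega_j$, propagate the constant across interior edges via the single-valuedness of $v_b$, and kill it at the subdomain boundary. You merely spell out two steps the paper leaves implicit (that $Q_b$ fixes constants since $k-1\ge 0$, and the connectivity walk through $\T_h^j$), which is a harmless refinement rather than a different argument.
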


\begin{proof}
It suffices to verify the positivity property for $\3bar \cdot \3bar_{1,\Omega_j}$. To this end, assume $\3bar v_j \3bar_{1,\Omega_j}=0$ for a weak function $v=\{v_{j,0}, v_{j,b}\}\in {W}_h(\Omega_j)$. It follows that $\nabla v_{j,0}=0$ on each $T\in {\cal T}_h$, $Q_bv_{j,0}=v_{j,b}$ on each $\partial T$, and $v_{j,b}=0$ on $\partial\Omega_j$. Therefore, we have $v_{j,0}=const$ on each $T\in {\cal T}_h^j$. Using $Q_bv_{j,0}=v_b$ on $\partial T$, we have $v_{j,0}=const$ in the subdomain $\Omega_j$. Using $v_{j,b}=0$ on $\partial \Omega_j$ and $Q_bv_{j,0}=v_{j,b}$ on $\partial T$, we obtain $v_{j,0}=0$ in $\Omega_j$. Again from $Q_bv_{j,0}=v_{j,b}$ on $\partial T$ we have $v_{j,b}=0$ in $\Omega_j$. This completes the proof of the lemma.
\end{proof}

\section{Numerical Experiments}\label{Section:numerical-experiments}

\def\ad#1{\begin{aligned}#1\end{aligned}}  \def\b#1{\mathbf{#1}} \def\t#1{\text{#1}}
\def\a#1{\begin{align*}#1\end{align*}} \def\an#1{\begin{align}#1\end{align}}
\def\t#1{\operatorname{#1}}

In this section, we shall report some numerical results to demonstrate the performance of the iterative procedure for the weak Galerkin finite element method \eqref{wg} for the second order elliptic model problem \eqref{model}.

\subsection{Test Example 1} The configuration of the test is set up as follows:  the coefficients are $a(x,y)=2-x(1-x)$ and $c=1$; the exact solution is  \an{\label{s-1} u(x,y)=2^6 x^2 (1-x)^2 y^2(1-y)^2;  }
   and the domain is  $\Omega=(0,1)^2$. Note that this corresponds to a homogeneous Dirichlet boundary value problem.
The triangular grids  shown as in Figures \ref{grid1} and \ref{grid2} are employed in the numerical tests. The 4-subdomain iterations and 16-subdomain iterations are computed respectively.
In Tables \ref{t01}-\ref{t06}, we list the computational  results
  for the $\{P_k,P_{k-1}\}$ weak Galerkin finite element defined in \eqref{wk} when
 the weak gradient is discretized by $[P_{k-1}(T)]^2$ defined in \eqref{wk-1}. Note that $Q_h u=\{ Q_0 u, Q_b u \}$  where $Q_0$ is the element-wise $L^2$ projection to space $P_k(T)$ and $Q_b$ is the edge-wise $L^2$ projection to $P_{k-1}(e)$.
In the computation,  the iteration stops when the iterative error reaches the truncation error.

\begin{figure}[ht] \begin{center}
\begin{picture}(320,100)(0,0)
\put(0,-50){\includegraphics[width=1.6in]{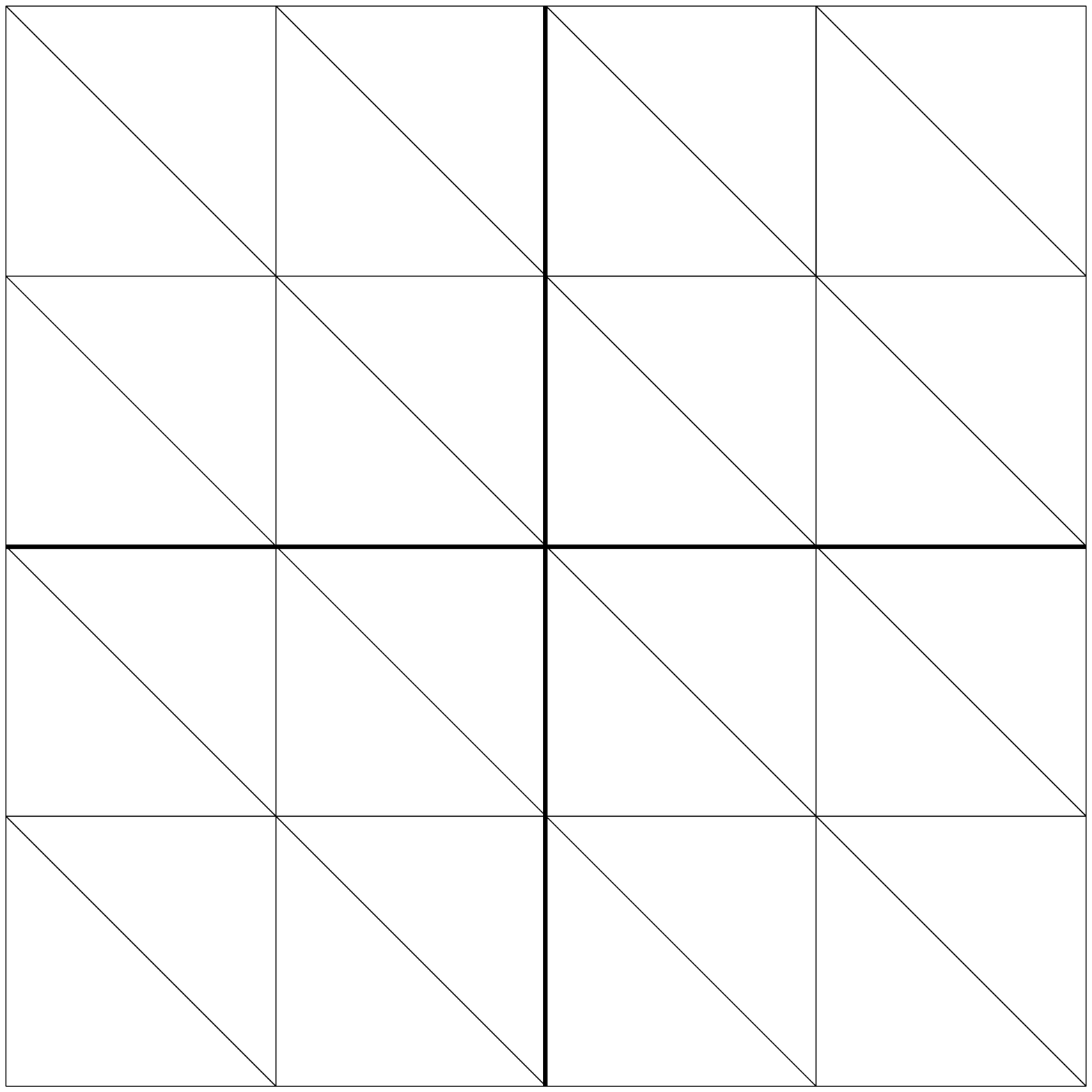}}
\put(110,-50){\includegraphics[width=1.6in]{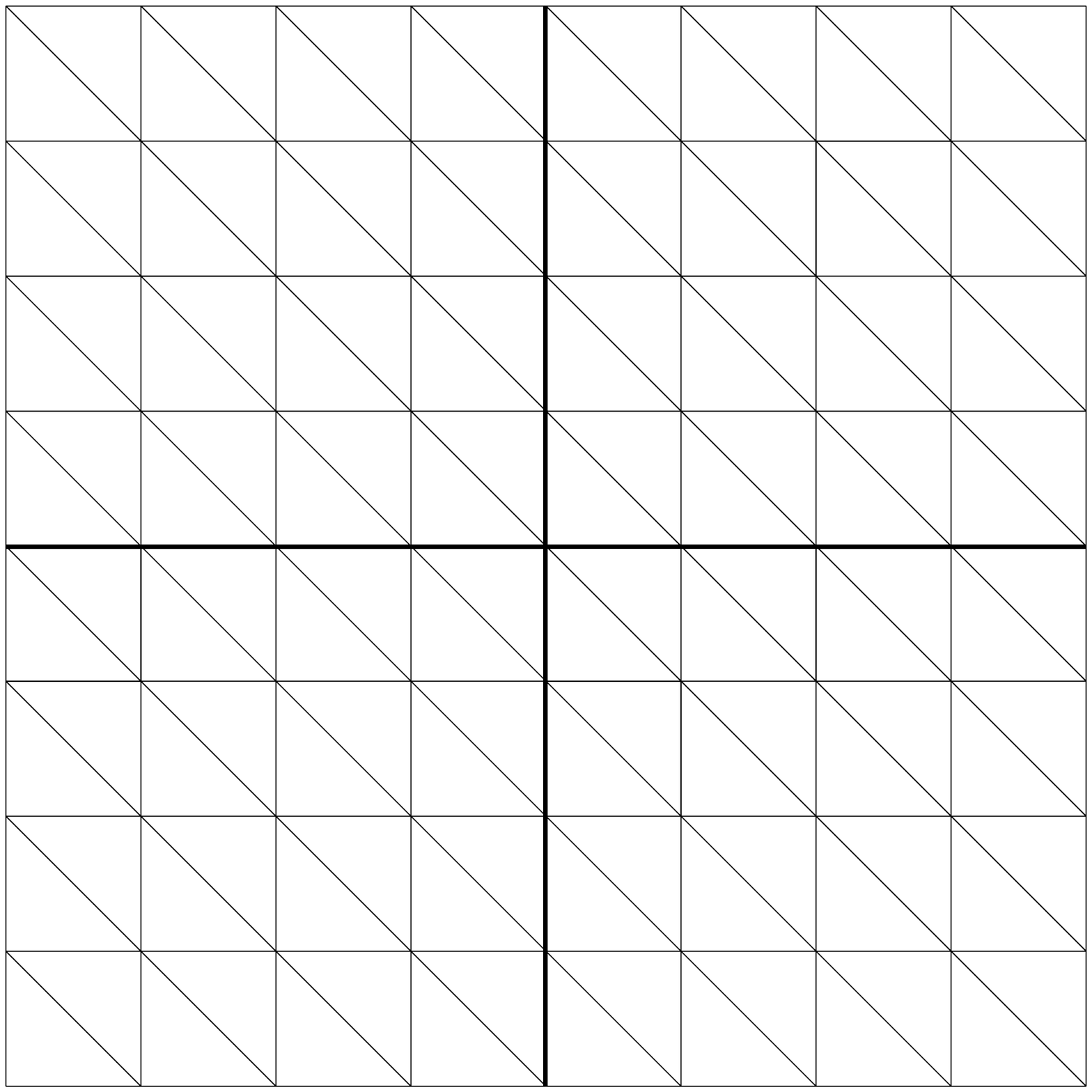}}
\put(220,-50){\includegraphics[width=1.6in]{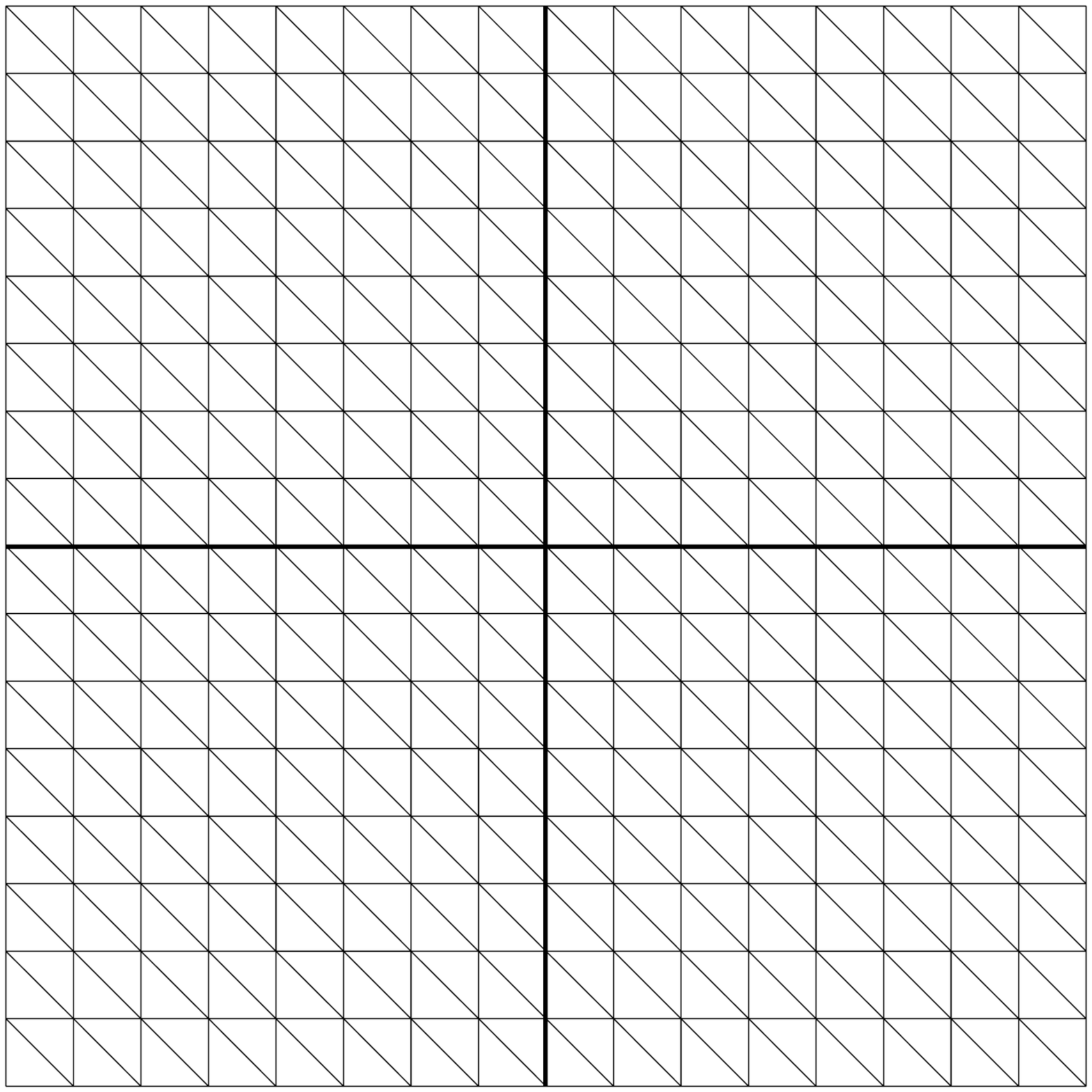}} \end{picture}
\caption{ The level 2, 3 and 4 grids for the 4-subdomain computation.  }
\label{grid1} \end{center}
\end{figure}

\begin{figure}[ht] \begin{center}
\begin{picture}(320,110)(0,0)
\put(0,-50){\includegraphics[width=1.6in]{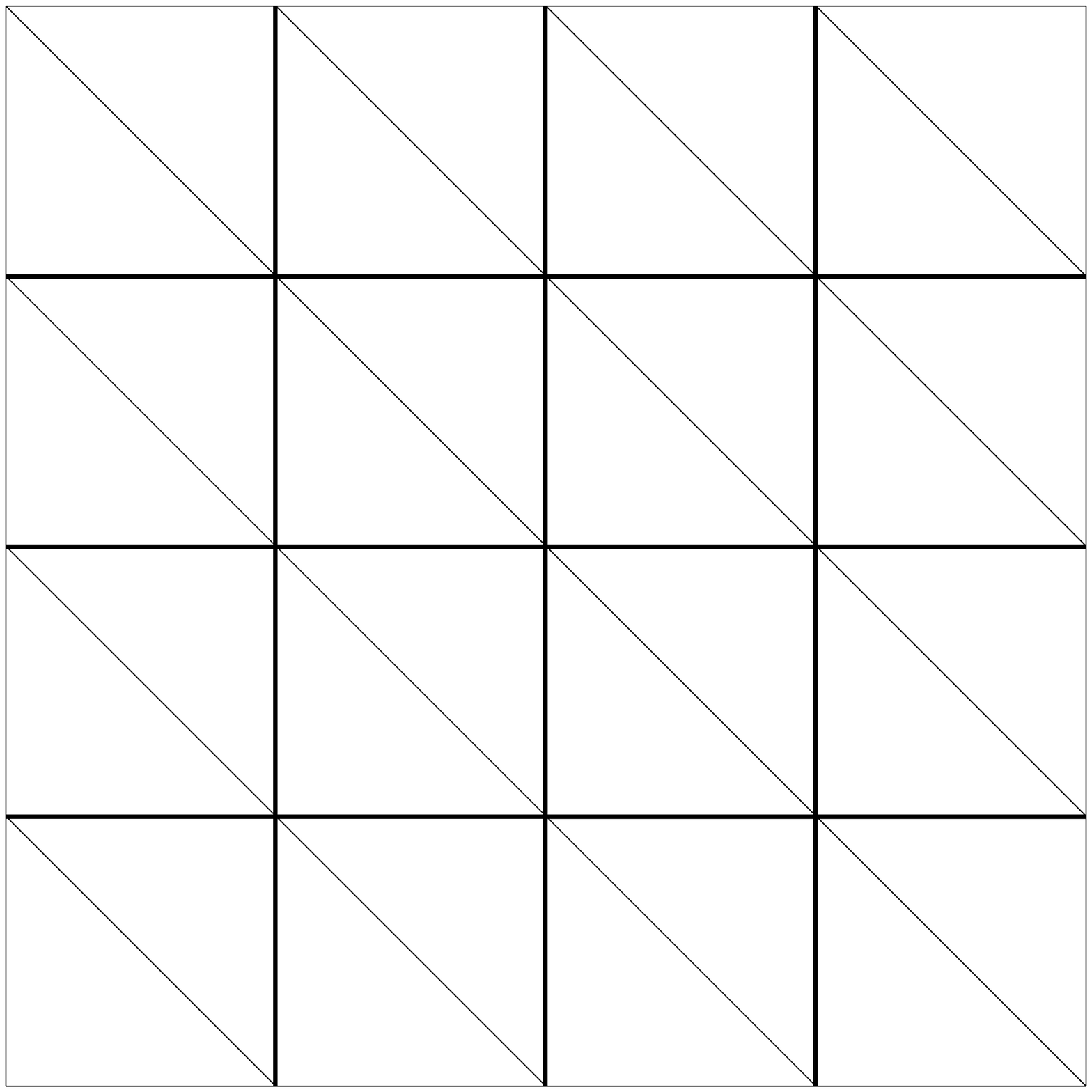}}
\put(110,-50){\includegraphics[width=1.6in]{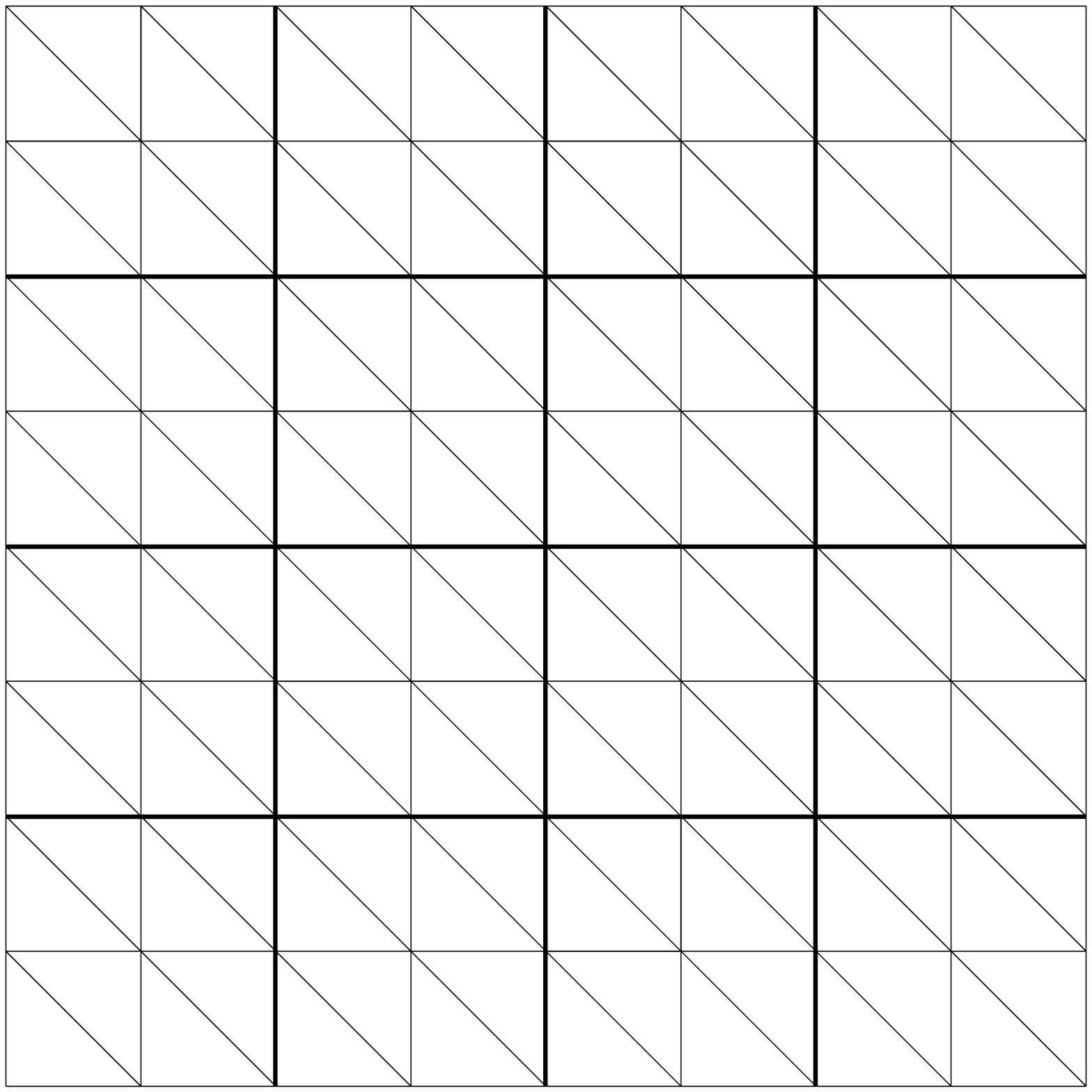}}
\put(220,-50){\includegraphics[width=1.6in]{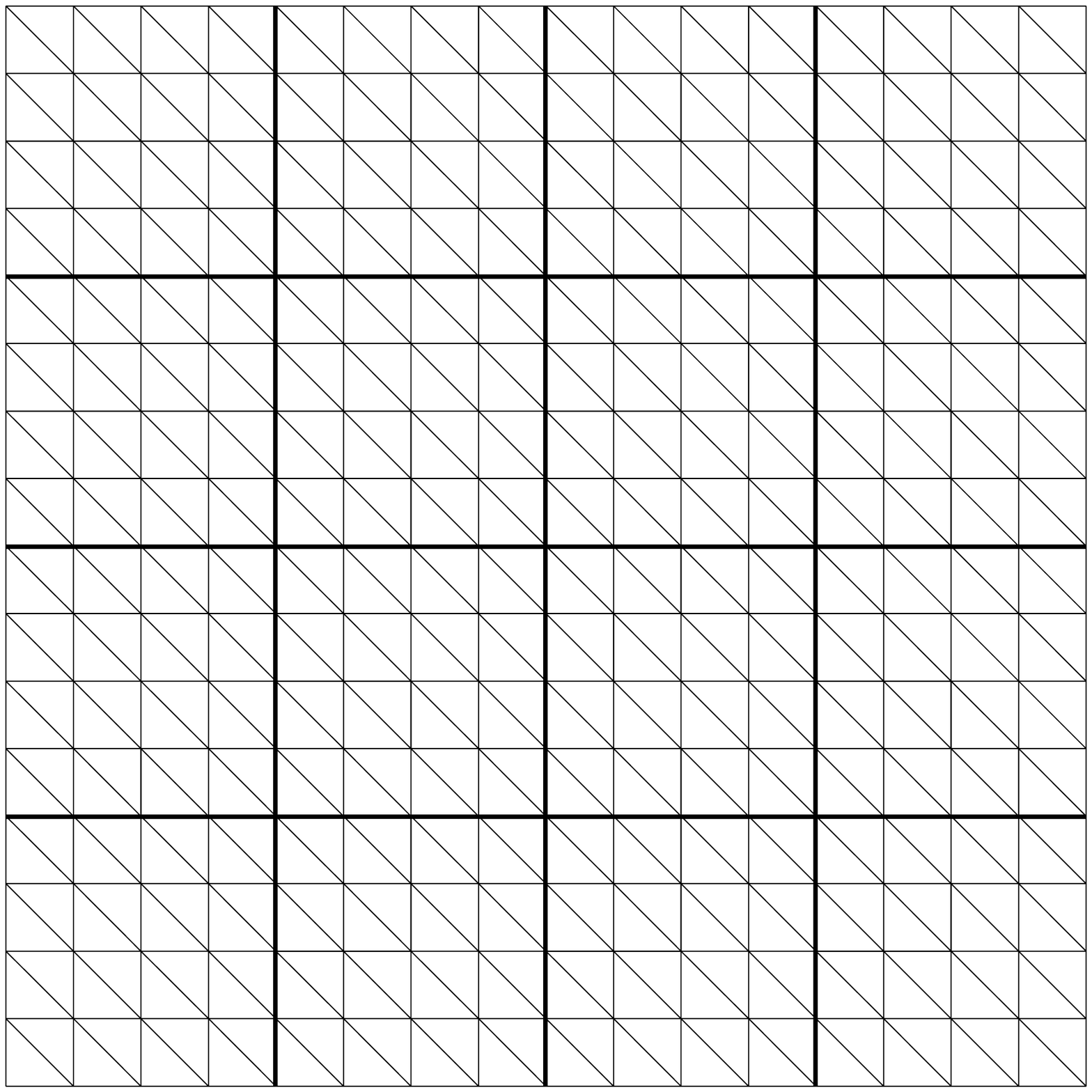}} \end{picture}
\caption{ The level 2, 3 and 4 grids for the 16-subdomain computation.  }
\label{grid2} \end{center}
\end{figure}

\begin{table}[ht]
  \centering   \renewcommand{\arraystretch}{1.05}
  \caption{Error profile of $\{P_1,P_0\}$ solutions for \eqref{s-1} on
      Figures \ref{grid1}--\ref{grid2} grids. }
\label{t01}
\begin{tabular}{c|cc|cc|r}
\hline
Grid & $\|Q_0 u - u_0 \|_0 $ &rate & $\|\nabla_w (Q_h u - u_h)\|_0$ &rate &\# iteration   \\
\hline
 &\multicolumn{5}{c}{The $\{P_1,P_0\}$ WG element \eqref{wk}, by 4-subdomain iteration.} \\
 \hline
 1&    0.211E+00 &  0.0&    0.614E-01 &  0.0 &      6 \\
 2&    0.684E-01 &  1.6&    0.181E+00 &  0.0 &      7 \\
 3&    0.178E-01 &  1.9&    0.110E+00 &  0.7 &      9 \\
 4&    0.448E-02 &  2.0&    0.579E-01 &  0.9 &     11 \\
 5&    0.112E-02 &  2.0&    0.293E-01 &  1.0 &     11 \\
 6&    0.280E-03 &  2.0&    0.147E-01 &  1.0 &     13 \\
 7&    0.692E-04 &  2.0&    0.734E-02 &  1.0 &     13 \\
\hline
 &\multicolumn{5}{c}{The $\{P_1,P_0\}$ WG element \eqref{wk}, by 16-subdomain iteration.} \\
 \hline
 1&    0.211E+00 &  0.0&    0.614E-01 &  0.0 &      6 \\
 2&    0.684E-01 &  1.6&    0.181E+00 &  0.0 &      7 \\
 3&    0.178E-01 &  1.9&    0.110E+00 &  0.7 &      9 \\
 4&    0.448E-02 &  2.0&    0.578E-01 &  0.9 &     11 \\
 5&    0.112E-02 &  2.0&    0.292E-01 &  1.0 &     11 \\
 6&    0.279E-03 &  2.0&    0.147E-01 &  1.0 &     13 \\
 7&    0.680E-04 &  2.0&    0.734E-02 &  1.0 &     13 \\
\hline
\end{tabular}%
\end{table}%

In Table \ref{t01}, we choose $\beta=8$ and the $\{P_1,P_0\}$ WG finite element \eqref{wk}. We can see from  Table \ref{t01} that we have an order two convergence in the  $L^2$-norm and an order one convergence in the energy norm.
We list the number of domain-decomposition iterations when the domain is subdivided into 4 subdomains and 16 subdomains respectively. In theory, the number of iterations may increase on higher level grids.
However, the number of iterations appears to be steady when using 4 subdomains and 16 subdomains.

\begin{table}[ht]
  \centering   \renewcommand{\arraystretch}{1.05}
  \caption{Error profile of $\{P_2,P_1\}$ solutions  for \eqref{s-1}  on
      Figures \ref{grid1}--\ref{grid2} grids. }
\label{t02}
\begin{tabular}{c|cc|cc|r}
\hline
Grid & $\|Q_0 u - u_0 \|_0 $ &rate & $\|\nabla_w (Q_h u - u_h)\|_0$ &rate &\# iteration   \\
\hline
 &\multicolumn{5}{c}{The $\{P_2,P_1\}$ WG element \eqref{wk}, by 4-subdomain iteration.} \\
 \hline
 1&    0.106E+00 &  0.0&    0.110E+00 &  0.0 &      6 \\
 2&    0.161E-01 &  2.7&    0.351E-01 &  1.7 &      9 \\
 3&    0.208E-02 &  3.0&    0.102E-01 &  1.8 &      9 \\
 4&    0.261E-03 &  3.0&    0.274E-02 &  1.9 &     11 \\
 5&    0.327E-04 &  3.0&    0.708E-03 &  2.0 &     17 \\
 6&    0.411E-05 &  3.0&    0.183E-03 &  2.0 &     20 \\
\hline
 &\multicolumn{5}{c}{The $\{P_2,P_1\}$ WG element \eqref{wk}, by 16-subdomain iteration.} \\
 \hline
 1&    0.106E+00 &  0.0&    0.110E+00 &  0.0 &      6 \\
 2&    0.161E-01 &  2.7&    0.352E-01 &  1.6 &      8 \\
 3&    0.207E-02 &  3.0&    0.103E-01 &  1.8 &     10 \\
 4&    0.262E-03 &  3.0&    0.275E-02 &  1.9 &     14 \\
 5&    0.327E-04 &  3.0&    0.709E-03 &  2.0 &     22 \\
 6&    0.410E-05 &  3.0&    0.182E-03 &  2.0 &     25 \\
\hline
\end{tabular}%
\end{table}%

In Table \ref{t02}, we employ the $\{P_2,P_1\}$ WG finite element \eqref{wk} and $\beta=8$.
   We can see from Table \ref{t02} that  we have an order three convergence in the $L^2$-norm and
  an order two convergence in the energy norm. In addition, the number of iterations needed for the 16-subdomain iteration is
  slightly higher than that of the 4-subdomain iteration.

\begin{table}[ht]
  \centering   \renewcommand{\arraystretch}{1.05}
  \caption{Error profile of $\{P_3,P_2\}$ solutions for \eqref{s-1}  on
      Figures \ref{grid1}--\ref{grid2} grids. }
\label{t03}
\begin{tabular}{c|cc|cc|r}
\hline
Grid & $\|Q_0 u - u_0 \|_0 $ &rate & $\|\nabla_w (Q_h u - u_h)\|_0$ &rate &\# iteration   \\
\hline
 &\multicolumn{5}{c}{The $\{P_3,P_2\}$ WG element \eqref{wk}, by 4-subdomain iteration.} \\
 \hline
 1&    0.366E-01 &  0.0&    0.279E-01 &  0.0 &     10 \\
 2&    0.236E-02 &  4.0&    0.632E-02 &  2.1 &     12 \\
 3&    0.151E-03 &  4.0&    0.947E-03 &  2.7 &     16 \\
 4&    0.947E-05 &  4.0&    0.126E-03 &  2.9 &     25 \\
 5&    0.592E-06 &  4.0&    0.162E-04 &  3.0 &     44 \\
\hline
 &\multicolumn{5}{c}{The $\{P_3,P_2\}$ WG element \eqref{wk}, by 16-subdomain iteration.} \\
 \hline
 1&    0.366E-01 &  0.0&    0.279E-01 &  0.0 &     10 \\
 2&    0.236E-02 &  4.0&    0.631E-02 &  2.1 &     14 \\
 3&    0.151E-03 &  4.0&    0.939E-03 &  2.7 &     19 \\
 4&    0.946E-05 &  4.0&    0.125E-03 &  2.9 &     35 \\
 5&    0.592E-06 &  4.0&    0.161E-04 &  3.0 &     59 \\
\hline
\end{tabular}%
\end{table}%

In Table \ref{t03}, $\beta=8$ and  $\{P_3,P_2\}$ weak Galerkin finite element are taken
for the 4-subdomain  and 16-subdomain iterations respectively. It can be seen from Table \ref{t03} that an order four convergence in the $L^2$-norm and an order three convergence in the energy norm are observed.  Table \ref{t03} shows that the number of iterations for the 16-subdomain iteration is somewhat higher than that of the 4-subdomain iteration. Note that the two iterations are the same as there are only 4 squares on the first level grid.

\begin{table}[ht]
  \centering   \renewcommand{\arraystretch}{1.05}
  \caption{Error profile of $\{P_4,P_3\}$ solutions  for \eqref{s-1}  on
      Figures \ref{grid1}--\ref{grid2} grids. }
\label{t04}
\begin{tabular}{c|cc|cc|r}
\hline
Grid & $\|Q_0 u - u_0 \|_0 $ &rate & $\|\nabla_w (Q_h u - u_h)\|_0$ &rate &\# iteration   \\
\hline
 &\multicolumn{5}{c}{By $\{P_4,P_3\}$ element \eqref{wk}, 4-subdomain iteration, $\beta=32$.} \\
 \hline
 1&    0.579E-02 &  0.0&    0.123E-01 &  0.0 &     29 \\
 2&    0.243E-03 &  4.6&    0.114E-02 &  3.4 &     55 \\
 3&    0.861E-05 &  4.8&    0.802E-04 &  3.8 &     96 \\
 4&    0.278E-06 &  5.0&    0.522E-05 &  3.9 &    121 \\
 5&    0.889E-08 &  5.0&    0.331E-06 &  4.0 &     78 \\
\hline
 &\multicolumn{5}{c}{By $\{P_4,P_3\}$ element \eqref{wk}, 16-subdomain iteration, $\beta=32$.} \\
 \hline
 1&    0.579E-02 &  0.0&    0.123E-01 &  0.0 &     29 \\
 2&    0.244E-03 &  4.6&    0.115E-02 &  3.4 &     59 \\
 3&    0.861E-05 &  4.8&    0.802E-04 &  3.8 &     97 \\
 4&    0.278E-06 &  5.0&    0.522E-05 &  3.9 &    121 \\
 5&    0.877E-08 &  5.0&    0.331E-06 &  4.0 &    139 \\
\hline
\end{tabular}%
\end{table}%

 The numerical results  for
 the $\{P_4,P_3\}$, $\{P_5,P_4\}$ and $\{P_6,P_5\}$ WG finite element solutions
 by the 4-subdomain  and 16-subdomain iterations are respectively listed in Tables \ref{t04}-\ref{t06} with corresponding
 $\beta=32$, $\beta=19$ and $\beta=32$.
In all these computations, we have observed the optimal order of convergence in the $L^2$-norm  and the energy norm.
It seems that larger $\beta$ may reduce the number of iterations for higher order finite elements.
It is surprising that $78$-iteration shows up for the $\{P_4,P_3\}$-element
   with 4-subdomain iterations on the fifth-level grid.
Another noticeable surprise is that the number of iterations for the 4-subdomain iteration
    increases much less than that of the 16-subdomain iteration, when the $\{P_5,P_4\}$ and $\{P_6,P_5\}$
    finite elements move to higher level grids. We conjecture that it might be due to non-hanging subdomains in the 4-subdomain iterations.

\begin{table}[ht]
  \centering   \renewcommand{\arraystretch}{1.05}
  \caption{Error profile of $\{P_5,P_4\}$ solutions  for \eqref{s-1}  on
      Figures \ref{grid1}--\ref{grid2} grids. }
\label{t05}
\begin{tabular}{c|cc|cc|r}
\hline
Grid & $\|Q_0 u - u_0 \|_0 $ &rate & $\|\nabla_w (Q_h u - u_h)\|_0$ &rate &\# iteration   \\
\hline
 &\multicolumn{5}{c}{By $\{P_5,P_4\}$ element \eqref{wk}, 4-subdomain iteration, $\beta=19$.} \\
 \hline
 1&    0.146E-02 &  0.0&    0.394E-02 &  0.0 &     31 \\
 2&    0.357E-04 &  5.4&    0.154E-03 &  4.7 &     55 \\
 3&    0.621E-06 &  5.8&    0.513E-05 &  4.9 &     71 \\
 4&    0.996E-08 &  6.0&    0.165E-06 &  5.0 &     78 \\
\hline
 &\multicolumn{5}{c}{By $\{P_5,P_4\}$ element \eqref{wk}, 16-subdomain iteration, $\beta=19$.} \\
 \hline
 1&    0.146E-02 &  0.0&    0.394E-02 &  0.0 &     31 \\
 2&    0.357E-04 &  5.4&    0.154E-03 &  4.7 &     59 \\
 3&    0.621E-06 &  5.8&    0.513E-05 &  4.9 &     81 \\
 4&    0.996E-08 &  6.0&    0.165E-06 &  5.0 &    104 \\
\hline
\end{tabular}%
\end{table}%

\begin{table}[ht]
  \centering   \renewcommand{\arraystretch}{1.05}
  \caption{Error profile of $\{P_6,P_5\}$ solutions  for \eqref{s-1}  on
      Figures \ref{grid1}--\ref{grid2} grids. }
\label{t06}
\begin{tabular}{c|cc|cc|r}
\hline
Grid & $\|Q_0 u - u_0 \|_0 $ &rate & $\|\nabla_w (Q_h u - u_h)\|_0$ &rate &\# iteration   \\
\hline
 &\multicolumn{5}{c}{By $\{P_6,P_5\}$ element \eqref{wk}, 4-subdomain iteration, $\beta=32$.} \\
 \hline
 1&    0.342E-03 &  0.0&    0.711E-03 &  0.0 &     43 \\
 2&    0.348E-05 &  6.6&    0.129E-04 &  5.8 &     67 \\
 3&    0.289E-07 &  6.9&    0.211E-06 &  5.9 &     99 \\
 4&    0.232E-09 &  7.0&    0.337E-08 &  6.0 &     94 \\
\hline
 &\multicolumn{5}{c}{By $\{P_6,P_5\}$ element \eqref{wk}, 16-subdomain iteration, $\beta=32$.} \\
 \hline
 1&    0.342E-03 &  0.0&    0.711E-03 &  0.0 &     43 \\
 2&    0.348E-05 &  6.6&    0.129E-04 &  5.8 &     97 \\
 3&    0.289E-07 &  6.9&    0.211E-06 &  5.9 &    133 \\
 4&    0.232E-09 &  7.0&    0.334E-08 &  6.0 &    156 \\
\hline
\end{tabular}%
\end{table}%

\subsection{Test Example 2}  We solve the elliptic boundary value model problem \eqref{model} where the configuration is set up as follows:  $a(x,y)=1$; $c=0$; the exact solution  \an{\label{s-2} u(x,y)=4  (x-x^3)  (y-y^3);  }
 and  the domain   $\Omega=(0,1)^2$.
The polygonal  grids of quadrilaterals and pentagons,  shown as in Figures \ref{grid3} and \ref{grid4},  are employed in this test. The 4-subdomain iterations and 16-subdomain iterations are computed respectively.

\begin{figure}[ht] \begin{center}
\begin{picture}(320,110)(0,0)
\put(0,-168){\includegraphics[width=5in]{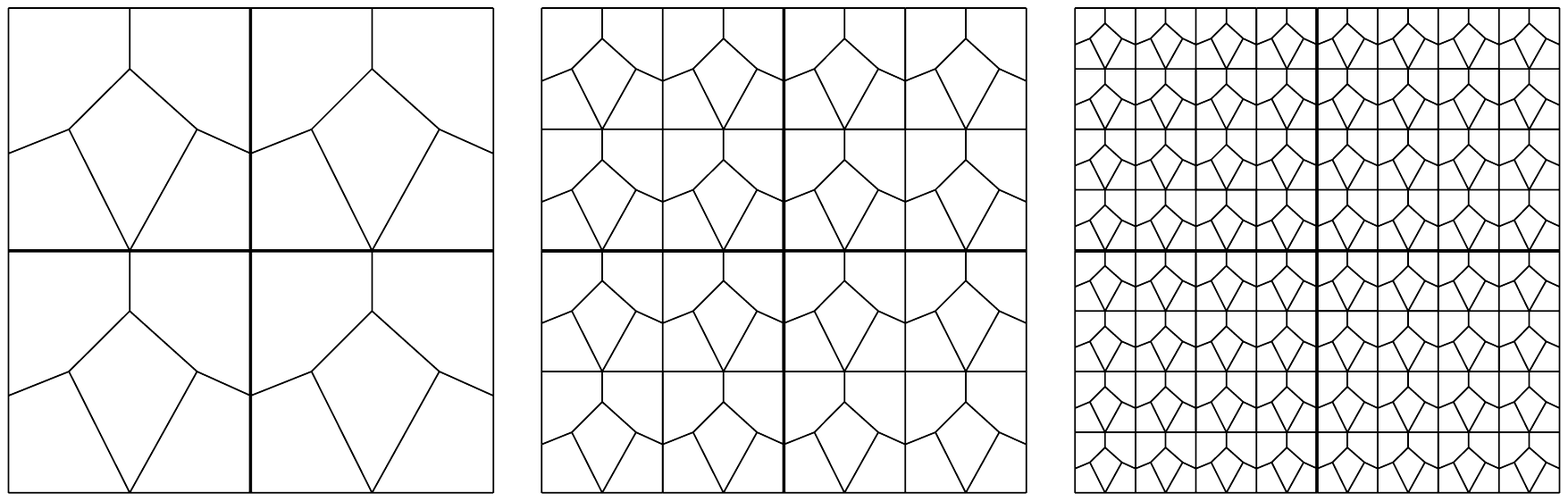}}  \end{picture}
\caption{ The first three levels of grids for the 4-subdomain iteration
    in Tables \ref{t21}--\ref{t22}.  }
\label{grid3} \end{center}
\end{figure}

\begin{figure}[ht] \begin{center}
\begin{picture}(320,110)(0,0)
\put(0,-168){\includegraphics[width=5in]{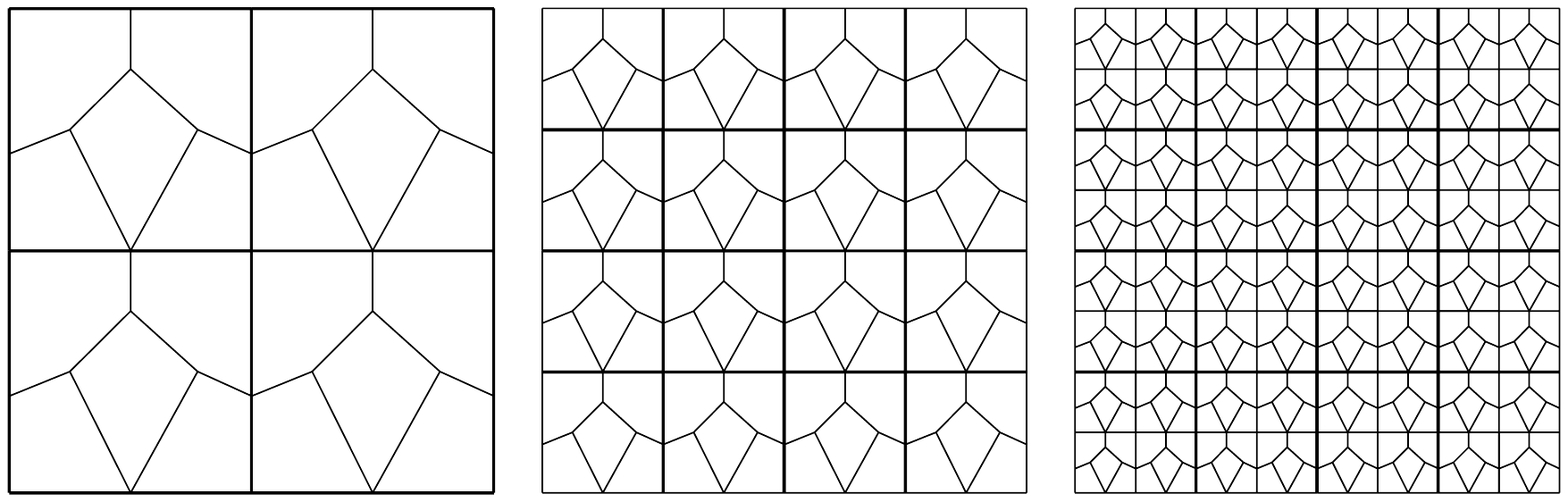}}  \end{picture}
\caption{ The first three levels of grids for the 16-subdomain iteration
    in Tables \ref{t21}--\ref{t22}.  }
\label{grid4} \end{center}
\end{figure}

The computational  results for the $\{P_2,P_{1}\}$ weak Galerkin finite element are listed in Table \ref{t21}.
Note that $Q_h u=\{ Q_0 u, Q_b u \}$  where $Q_0$ is the element-wise $L^2$ projection to the space $P_k(T)$ and $Q_b$ is the edge-wise $L^2$ projection to the space $P_{k-1}(e)$.
In the computation,  the iterative process is stopped when the iterative error achieves the truncation error.
The numerical solution converges at the optimal order in the $L^2$-norm  and in the energy norm respectively.  The number of iterations for the 4-subdomain method is slightly less than that of the 16-subdomain method.

\begin{table}[ht]
  \centering   \renewcommand{\arraystretch}{1.05}
  \caption{Error profile of $\{P_2,P_1\}$ \eqref{wk} solutions   on
      Figures \ref{grid3}--\ref{grid4} grids, and the number of iterations with
   4-subdomains and 16-subdomains. }
\label{t21}
\begin{tabular}{c|cc|cc|rr}
\hline
Grid & $\|Q_0 u - u_0 \|_0 $ &rate & $\|\nabla_w (Q_h u - u_h)\|_0$ &rate
&\multicolumn{2}{c}{\# iteration }  \\
\hline
 &\multicolumn{4}{c|}{The $\{P_2,P_1\}$ WG element } & 4-sub & 16-sub  \\
 \hline
 1&    0.180E-01 &  0.0&    0.817E-01 &  0.0 &     20 & 20\\
 2&    0.228E-02 &  3.0&    0.202E-01 &  2.0 &     24 & 41\\
 3&    0.287E-03 &  3.0&    0.491E-02 &  2.0 &     32 &  54\\
 4&    0.349E-04 &  3.0&    0.121E-02 &  2.0 &     43 &  68\\
 5&    0.432E-05 &  3.0&    0.299E-03 &  2.0 &     54  &   84\\
 6&    0.543E-06 &  3.0&    0.745E-04 &  2.0 &     60 &     99 \\
\hline
\end{tabular}%
\end{table}%

 The errors for the $\{P_3,P_{2}\}$ and $\{P_4,P_{3}\}$ weak Galerkin finite elements \eqref{wk} are listed in Table \ref{t22}.
The numerical solutions converge at the optimal order in the $L^2$-norm  and in the energy norm respectively. Due to the use of polygonal meshes,  the round-off error is accumulated to be very large
 for the computation of $\{P_4,P_{3}\}$ element on the 4th-level grid.
We observe that for the high-order finite elements,  the number of iteration may not
 increase when the  number of the grid level  increases.

\begin{table}[ht]
  \centering   \renewcommand{\arraystretch}{1.05}
  \caption{Error profile of $\{P_k,P_{k-1}\}$ \eqref{wk} solutions  on
      Figures \ref{grid3}--\ref{grid4} grids, and the number of iterations with
   4-subdomains and 16-subdomains. }
\label{t22}
\begin{tabular}{c|cc|cc|rr}
\hline
Grid & $\|Q_0 u - u_0 \|_0 $ &rate & $\|\nabla_w (Q_h u - u_h)\|_0$ &rate
&\multicolumn{2}{c}{\# iteration }  \\
\hline
 &\multicolumn{4}{c|}{The $\{P_3,P_2\}$ WG element } & 4-sub & 16-sub  \\
 \hline
 1&    0.174E-02 &  0.0&    0.933E-02 &  0.0 &     39 &     39 \\
 2&    0.110E-03 &  4.0&    0.126E-02 &  2.9 &     51 &     55 \\
 3&    0.687E-05 &  4.0&    0.162E-03 &  3.0 &     84 &     89 \\
 4&    0.438E-06 &  4.0&    0.207E-04 &  3.0 &     59 &     86 \\
 5&    0.278E-07 &  4.0&    0.260E-05 &  3.0 &     69 &     115\\
\hline
 &\multicolumn{4}{c|}{The $\{P_4,P_3\}$ WG element } & 4-sub & 16-sub  \\
 \hline
 1&    0.115E-03 &  0.0&    0.417E-03 &  0.0 &     44 &     44 \\
 2&    0.364E-05 &  5.0&    0.274E-04 &  3.9 &     64 &     64 \\
 3&    0.114E-06 &  5.0&    0.176E-05 &  4.0 &     99 &     93 \\
 4&    0.469E-08 &  ---&    0.113E-06 &  4.0 &     95 &    111 \\
\hline
\end{tabular}%
\end{table}%

\subsection{Test Example 3} We shall test the subdomain iterations for a superconvergent WG finite element method \cite{zhang}. We consider the Poisson equation  on the domain $\Omega=(0,1)^2$ with the exact solution
$u(x,y)=\sin(\pi x)\sin(\pi y)$. The superconvergent $\{P_k, P_k\}$ WG finite element was introduced in \cite{zhang} as follows:
  \begin{equation}\label{F2}
      \{ \{v_0, v_b\} : v_0 \in P_k(T), \ v_b\in P_{k}(e), \ e\subset T, \
                 T\in \T_h \}.
  \end{equation}
For this superconvergent $\{P_k, P_k\}$ WG finite element, the weak gradient is defined by
   $\nabla_w v_h\in \b{RT}_k= P_{k}(T)^2 + \b x P_{k}(T)$,
   satisfying
\an{\label{G2} ( \nabla_w v_h, \b q)_T = -(v_0, \t{div} \b q)_T
   +\langle v_b, \b q\cdot \b n_T\rangle_{\partial T},\quad\forall \b q\in \b{RT}_k.  }
For this WG finite element,  the stabilizer $s(\cdot, \cdot)$ is dropped from the WG scheme \eqref{wg}, in order to get one-order superconvergence as discussed in \cite{zhang}.

We take  $\beta=4$ in all  $\{P_k,P_k\}$ weak Galerkin finite element computations.
Note that $Q_h u=\{ Q_0 u, Q_b u\}$ where  $Q_0 u$ is the triangle-wise $L^2$ projection of $u$ onto the polynomial space $P_k(T)$ and $Q_b$ is the edge-wise  $L^2$ projection of $u$ onto the polynomial space $P_k(e)$.
 Again the iteration is stopped when the iterative error is about the truncation error.
In Tables \ref{t1}-\ref{t4}, we have observed one-order superconvergence for the $\{P_k,P_k\}$ weak Galerkin finite element
defined in  \eqref{F2}, in both the $L^2$-norm and the energy norm.

As expected, the number of 16-subdomain iterations would be more than that of
4-subdomain iterations for  the $\{P_1,P_1\}$ weak Galerkin finite element as we can see from Table \ref{t1}.

\begin{table}[ht]
  \centering   \renewcommand{\arraystretch}{1.05}
  \caption{Error profile for the $\{P_1,P_1\}$ solution   on
      Figures \ref{grid1}--\ref{grid2} grids. }
\label{t1}
\begin{tabular}{c|cc|cc|r}
\hline
Grid & $\|Q_0 u - u_0 \|_0 $ &rate & $\|\nabla_w (Q_h u - u_h)\|_0$ &rate &\# iteration   \\
\hline
 &\multicolumn{5}{c}{The $\{P_1,P_1\}$ WG element \eqref{F2}, by 4-subdomain iteration.} \\
 \hline
 1&    0.208E-01 &  0.0&    0.500E+00 &  0.0 &      7 \\
 2&    0.305E-02 &  2.8&    0.140E+00 &  1.8 &      9 \\
 3&    0.393E-03 &  3.0&    0.362E-01 &  2.0 &     13 \\
 4&    0.494E-04 &  3.0&    0.912E-02 &  2.0 &     12 \\
 5&    0.624E-05 &  3.0&    0.229E-02 &  2.0 &     17 \\
 6&    0.789E-06 &  3.0&    0.572E-03 &  2.0 &     26 \\
 7&    0.994E-07 &  3.0&    0.143E-03 &  2.0 &     42 \\
\hline
 &\multicolumn{5}{c}{The $\{P_1,P_1\}$ WG element \eqref{F2}, by 16-subdomain iteration.} \\
 \hline
 1&    0.208E-01 &  0.0&    0.500E+00 &  0.0 &      9 \\
 2&    0.304E-02 &  2.8&    0.140E+00 &  1.8 &     11 \\
 3&    0.392E-03 &  3.0&    0.362E-01 &  2.0 &     13 \\
 4&    0.496E-04 &  3.0&    0.913E-02 &  2.0 &     13 \\
 5&    0.631E-05 &  3.0&    0.229E-02 &  2.0 &     18 \\
 6&    0.795E-06 &  3.0&    0.572E-03 &  2.0 &     30 \\
 7&    0.101E-06 &  3.0&    0.143E-03 &  2.0 &     48 \\
\hline
\end{tabular}%
\end{table}%

In Table \ref{t2}, we list the computational  results for the $\{P_2,P_2\}$ weak Galerkin finite element.
Worse than the $\{P_1,P_1\}$ computation, the number of iterations for 16-subdomains is much more than that of
4-subdomain. But on the other side,  a better parallelization is possible for the computation with 16-subdomains.
 
\begin{table}[ht]
  \centering   \renewcommand{\arraystretch}{1.05}
  \caption{Error profile for the $\{P_2,P_2\}$ solution   on
      Figures \ref{grid1}--\ref{grid2} grids. }\label{t2}
\begin{tabular}{c|cc|cc|r}
\hline
Grid & $\|Q_0 u - u_0 \|_0 $ &rate & $\|\nabla_w (Q_h u - u_h)\|_0$ &rate &\# iteration   \\
\hline
 &\multicolumn{5}{c}{The $\{P_2,P_2\}$ WG element \eqref{F2}, by 4-subdomain iteration.} \\
 \hline
 1&    0.320E-02 &  0.0&    0.116E+00 &  0.0 &     12 \\
 2&    0.203E-03 &  4.0&    0.158E-01 &  2.9 &     12 \\
 3&    0.127E-04 &  4.0&    0.202E-02 &  3.0 &     14 \\
 4&    0.802E-06 &  4.0&    0.254E-03 &  3.0 &     25 \\
 5&    0.507E-07 &  4.0&    0.318E-04 &  3.0 &     48 \\
 6&    0.321E-08 &  4.0&    0.398E-05 &  3.0 &     92 \\
\hline
 &\multicolumn{5}{c}{The $\{P_2,P_2\}$ WG element \eqref{F2}, by 16-subdomain iteration.} \\
 \hline
 1&    0.320E-02 &  0.0&    0.116E+00 &  0.0 &     16 \\
 2&    0.202E-03 &  4.0&    0.158E-01 &  2.9 &     12 \\
 3&    0.128E-04 &  4.0&    0.202E-02 &  3.0 &     15 \\
 4&    0.801E-06 &  4.0&    0.254E-03 &  3.0 &     29 \\
 5&    0.508E-07 &  4.0&    0.318E-04 &  3.0 &     55 \\
 6&    0.321E-08 &  4.0&    0.398E-05 &  3.0 &    107 \\
\hline
\end{tabular}%
\end{table}%

Table \ref{t3} shows that the number of iterations for 16-subdomains is about the same as that of
4-subdomains for the $\{P_3,P_3\}$ weak Galerkin finite element.

\begin{table}[ht]
  \centering   \renewcommand{\arraystretch}{1.05}
  \caption{Error profile for the $\{P_3,P_3\}$ solution on
      Figures \ref{grid1}--\ref{grid2} grids. }\label{t3}
\begin{tabular}{c|cc|cc|r}
\hline
Grid & $\|Q_0 u - u_0 \|_0 $ &rate & $\|\nabla_w (Q_h u - u_h)\|_0$ &rate &\# iteration   \\
\hline
 &\multicolumn{5}{c}{The $\{P_3,P_3\}$ WG element \eqref{F2}, by 4-subdomain iteration.} \\
 \hline
 1&    0.478E-03 &  0.0&    0.222E-01 &  0.0 &     12 \\
 2&    0.179E-04 &  4.7&    0.149E-02 &  3.9 &     24 \\
 3&    0.593E-06 &  4.9&    0.944E-04 &  4.0 &     44 \\
 4&    0.189E-07 &  5.0&    0.593E-05 &  4.0 &     82 \\
 5&    0.597E-09 &  5.0&    0.372E-06 &  4.0 &    142 \\
\hline
 &\multicolumn{5}{c}{The $\{P_3,P_3\}$ WG element \eqref{F2}, by 16-subdomain iteration.} \\
 \hline
 1&    0.478E-03 &  0.0&    0.222E-01 &  0.0 &     12 \\
 2&    0.179E-04 &  4.7&    0.149E-02 &  3.9 &     22 \\
 3&    0.593E-06 &  4.9&    0.944E-04 &  4.0 &     46 \\
 4&    0.189E-07 &  5.0&    0.593E-05 &  4.0 &     80 \\
 5&    0.596E-09 &  5.0&    0.372E-06 &  4.0 &    161 \\
\hline
\end{tabular}%
\end{table}%

Finally   the computational results by using $\{P_4,P_4\}$, $\{P_5,P_5\}$, $\{P_6,P_6\}$ weak Galerkin finite elements are illustrated in Table \ref{t4}. In all these cases, the number of 16-subdomain iterations is slightly bigger than that of the 4-subdomain iterations.

\begin{table}[ht]
  \centering   \renewcommand{\arraystretch}{1.05}
  \caption{Error profiles for the $P_4$,$P_5$ and $P_6$ WG solution  on
      Figures \ref{grid1}--\ref{grid2} grids. }\label{t4}
\begin{tabular}{c|cc|cc|r}
\hline
Grid & $\|Q_0 u - u_0 \|_0 $ &rate & $\|\nabla_w (Q_h u - u_h)\|_0$ &rate &\# iteration   \\
\hline
 &\multicolumn{5}{c}{The $\{P_4,P_4\}$ WG element \eqref{F2}, by 4-subdomain iteration.} \\
 \hline
 2&    0.120E-05 &  6.0&    0.122E-03 &  4.9 &     34 \\
 3&    0.190E-07 &  6.0&    0.388E-05 &  5.0 &     54 \\
 4&    0.301E-09 &  6.0&    0.122E-06 &  5.0 &    123 \\
\hline
 &\multicolumn{5}{c}{The $\{P_4,P_4\}$ WG element \eqref{F2}, by 16-subdomain iteration.} \\
 \hline
 2&    0.119E-05 &  6.0&    0.122E-03 &  4.9 &     24 \\
 3&    0.190E-07 &  6.0&    0.387E-05 &  5.0 &     62 \\
 4&    0.300E-09 &  6.0&    0.123E-06 &  5.0 &    133 \\
\hline
 &\multicolumn{5}{c}{The $\{P_5,P_5\}$ WG element \eqref{F2}, by 4-subdomain iteration.} \\
 \hline
 1&    0.938E-05 &  0.0&    0.541E-03 &  0.0 &     18 \\
 2&    0.827E-07 &  6.8&    0.896E-05 &  5.9 &     36 \\
 3&    0.655E-09 &  7.0&    0.141E-06 &  6.0 &     95 \\
\hline
 &\multicolumn{5}{c}{The $\{P_5,P_5\}$ WG element \eqref{F2}, by 16-subdomain iteration.} \\
 \hline
 1&    0.938E-05 &  0.0&    0.541E-03 &  0.0 &     18 \\
 2&    0.816E-07 &  6.8&    0.890E-05 &  5.9 &     44 \\
 3&    0.647E-09 &  7.0&    0.141E-06 &  6.0 &    111 \\
\hline
 &\multicolumn{5}{c}{The $\{P_6,P_6\}$ WG element \eqref{F2}, by 4-subdomain iteration.} \\
 \hline
 1&    0.115E-05 &  0.0&    0.710E-04 &  0.0 &     24 \\
 2&    0.460E-08 &  8.0&    0.579E-06 &  6.9 &     66 \\
 3&    0.185E-10 &  8.0&    0.459E-08 &  7.0 &    162 \\
\hline
 &\multicolumn{5}{c}{The $\{P_6,P_6\}$ WG element \eqref{F2}, by 16-subdomain iteration.} \\
 \hline
 1&    0.115E-05 &  0.0&    0.710E-04 &  0.0 &     24 \\
 2&    0.459E-08 &  8.0&    0.578E-06 &  6.9 &     72 \\
 3&    0.185E-10 &  8.0&    0.461E-08 &  7.0 &    166 \\
\hline
\end{tabular}%
\end{table}%

\newpage



\end{document}